\providecommand{\U}[1]{\protect\rule{.1in}{.1in}}
\theoremstyle{plain}
\newtheorem{corollary}{Corollary}
\newtheorem{lemma}{Lemma}
\newtheorem{proposition}{Proposition}
\newtheorem{remark}{Remark}
\newtheorem{theorem}{Theorem}
\numberwithin{equation}{section}
\DeclareMathOperator{\dist}{dist} \DeclareMathOperator{\diam}{diam}
\begin{document}
\title[Upper bounds of nodal sets ]
{Upper bounds of nodal sets for eigenfunctions of eigenvalue problems}
\author {Fanghua Lin}
\address{
Department of Mathematics\\
Courant Institute of Mathematical Sciences \\
New York University\\
New York, NY 10012, USA\\
Email:   linf@cims.nyu.edu }
\author{ Jiuyi Zhu}
\address{
Department of Mathematics\\
Louisiana State University\\
Baton Rouge, LA 70803, USA\\
Email:  zhu@math.lsu.edu }
\thanks{Lin is supported in part by NSF grant DMS-1955249, Zhu is supported in part by NSF grant OIA-1832961}
\date{}
\subjclass[2010]{35J05, 58J50, 35P15, 35P20.} \keywords {Nodal sets, Doubling inequalities, Higher order elliptic equations }

\begin{abstract}
 The aim of this article is to provide a simple and unified way to obtain the sharp upper bounds of nodal sets of eigenfunctions for different types of eigenvalue problems on real analytic domains. The examples include biharmonic Steklov eigenvalue problems, buckling eigenvalue problems and champed-plate eigenvalue problems. The geometric measure of nodal sets are derived from doubling inequalities and growth estimates
 for eigenfunctions. It is done through analytic estimates of Morrey-Nirenberg and Carleman estimates.
\end{abstract}

\maketitle
\section{Introduction}
The eigenvalue and
eigenfunction problems are {archetypical} in the theory of partial differential equations.
Different type of second order or higher order eigenvalue problems arise from physical phenomena in the literature. For instance, the famous Chaldni pattern is the nodal pattern modeled by the eigenfunctions of bi-Laplace eigenvalue problems. The Chladni pattern is the scientific, artistic, and even the sociological birthplace of the modern field of wave physics and quantum chaos. The goal of the paper is to provide a uniform way to obtain the upper bounds of nodal sets of eigenfunctions for various eigenvalue problems in real analytic domains. Since the nodal sets of eigenfunctions of Laplacian are well studied, we will focus on eigenfunctions of some higher order elliptic equations. The approach introduced in the paper also applies to the upper bounds of eigenfunctions of Laplacian with different boundary conditions in real analytic domains. Specifically, we consider three types of biharmonic Steklov eigenvalue problems
\begin{equation}
\left \{ \begin{array}{lll}
\triangle^2 e_\lambda=0  \quad &\mbox{in} \ {\Omega},  \medskip\\
e_\lambda=\triangle e_\lambda-\lambda \frac{\partial e_\lambda}{\partial \nu} =0 \quad &\mbox{on} \ {\partial\Omega},
\end{array}
\right.
\label{bistek1}
\end{equation}

\begin{equation}
\left \{ \begin{array}{lll}
\triangle^2 e_\lambda=0 \quad &\mbox{in} \ {\Omega},  \medskip\\
e_\lambda=\frac{\partial^2 e_\lambda}{\partial \nu^2} -\lambda \frac{\partial e_\lambda}{\partial \nu}=0    \quad &\mbox{on} \ {\partial\Omega}
\end{array}
\right.
\label{bistek2}
\end{equation}
and
\begin{equation}
\left \{ \begin{array}{lll}
\triangle^2 e_\lambda=0 \quad &\mbox{in} \ {\Omega},  \medskip\\
\frac{\partial e_\lambda}{\partial \nu}  =\frac{\partial \triangle e_\lambda}{\partial \nu}+\lambda^3e_\lambda=0    \quad &\mbox{on} \ {\partial\Omega},
\end{array}
\right.
\label{bistek3}
\end{equation}
where $\Omega\in \mathbb R^n$ with $n\geq 2$
is a bounded real analytic domain, $\nu$ is a unit outer normal, and $n$ is the dimension of the space in the paper. Those eigenvalue problems are important in biharmonic analysis, inverse problem and the theory of elasticity, see e.g. \cite{FGW}, \cite{KS}, \cite{P}.  If we consider the eigenfunctions in (\ref{bistek1})--(\ref{bistek3}) on the boundary, they become the eigenfunctions of  Neumann-to-Laplacian operator, Neumann-to-Neumann operator and Dirichlet to Neumann operator, respectively, see \cite{C}. The bi-Laplace equation arises in numerous problems of structural engineering. It models the displacements of a thin plate clamped near its boundary, the stresses
in an elastic body, the stream function in creeping flow of a viscous incompressible
fluid, etc. See, e.g. \cite{Me}.
Other typical bi-Laplace eigenvalue problems include
the buckling eigenvalue problem
\begin{equation}
\left \{ \begin{array}{lll}
\triangle^2 e_\lambda+\lambda \triangle e_\lambda=0  \quad &\mbox{in} \ {\Omega},  \medskip\\
e_\lambda=\frac{\partial e_\lambda}{\partial \nu}=0    \quad &\mbox{on} \ {\partial\Omega}
\end{array}
\right.
\label{bilap}
\end{equation}
and
the clamped-plate eigenvalue problem
\begin{equation}
\left \{ \begin{array}{lll}
\triangle^2 e_\lambda=\lambda e_\lambda  \quad &\mbox{in} \ {\Omega},  \medskip\\
e_\lambda=\frac{\partial e_\lambda}{\partial \nu}=0    \quad &\mbox{on} \ {\partial\Omega}.
\end{array}
\right.
\label{dirilap}
\end{equation}
 The buckling eigenvalue problem (\ref{bilap}) describes the critical buckling load of a clamped plate
subjected to a uniform compressive force around its boundary. The clamped-plate eigenvalue problem (\ref{dirilap}) arises from the vibration of a rigid thin plate with  clamped conditions.
For those eigenvalue problems,  there exists a sequence of eigenvalues $0\leq \lambda_1\leq \lambda_2<\cdots \to \infty$. Eigenfunctions $e_\lambda$ changes sign in $\Omega$ as $\lambda$ increases.

To find upper bounds of geometric measure of nodal sets of eigenfunctions for those eigenvalue problems in real analytic domains has been an interesting topic. For classical eigenfunctions on the smooth compact  Riemannian manifold
\begin{equation}
\triangle e_\lambda+\lambda e_\lambda=0 \quad \mbox{on} \ \mathcal{M},
\label{class}
\end{equation}
Yau \cite{Y} conjectured that the Hausdorff measure of nodal sets is bounded above and below as
\begin{align} c\sqrt{\lambda}\leq H^{n-1}(\{\mathcal{M}| e_\lambda(x)=0\})\leq C\sqrt{\lambda},
\label{conjecture}
\end{align}
where $c$, $C$ depend on the manifold $\mathcal{M}$. For the real analytic manifolds, the conjecture (\ref{conjecture}) was answered by Donnelly and Fefferman in their seminal paper \cite{DF}.
A relatively simpler proof using frequency functions for the upper
bound for general second order elliptic equations was given in \cite{Lin}. Let us review briefly
the recent literature concerning the progress of Yau's conjecture on nodal sets of classical eigenfunctions (\ref{class}). For the conjecture (\ref{conjecture}) on the measure of nodal sets on smooth manifolds, there are important breakthrough made by Logunov and Malinnikova \cite{LM},  \cite{Lo} and \cite{Lo1} in recent years. For the upper bounds of nodal sets on two dimensional manifolds,   Logunov and
Malinnikova \cite{LM} showed that $H^{1}(\{x\in\mathcal{M}| u(x)=0\})\leq C \lambda^{\frac{3}{4}-\epsilon} $, which slightly improve the upper bound $C \lambda^{\frac{3}{4}}$ by Donnelly and Fefferman \cite{DF2} and  Dong \cite{D}. For the upper bounds in higher dimensions $n\geq 3$ on smooth manifolds,  Logunov in \cite{Lo} obtained a polynomial upper bound
\begin{equation} H^{n-1}( \{x\in\mathcal{M} | u(x)=0\})\leq C \lambda^{\beta},
\label{poly}
\end{equation}
where $\beta>\frac{1}{2}$ depends only on the dimension. The polynomial upper bound (\ref{poly}) improves the exponential upper bound derived by  Hardt and Simon \cite{HS}.
For the lower bound, Logunov \cite{Lo1} completely solved the Yau's conjecture and obtained the sharp lower bound as
\begin{align} c\sqrt{\lambda}\leq H^{n-1}(\{x\in\mathcal{M}| u(x)=0\})
\label{lower} \end{align} for
 smooth manifolds for any dimensions. For $n=2$, such sharp lower bound was obtained earlier by Br\"uning \cite{Br}. This sharp lower bound (\ref{lower}) improves a polynomial lower bound obtained early by Colding and  Minicozzi \cite{CM}, Sogge and Zelditch \cite{SZ}. See also other polynomial lower bounds by different methods, e.g. \cite{HSo}, \cite{M}, \cite{S}.

Donnelly and Fefferman \cite{DF1} also considered the Dirichlet and Neumann eigenvalue problem on real analytic manifold $\mathcal{M}$ with boundary.
For the Dirichlet eigenvalue problem
\begin{equation}
\left \{ \begin{array}{rll}
-\triangle e_\lambda=\lambda e_\lambda \quad &\mbox{in} \ {\mathcal{M}},  \medskip\\
e_\lambda=0 \quad &\mbox{on} \ {\partial\mathcal{M}}
\end{array}
\right.
\label{Diri}
\end{equation}
and
Neumann eigenvalue problem
\begin{equation}
\left \{ \begin{array}{rll}
-\triangle e_\lambda=\lambda e_\lambda \quad &\mbox{in} \ {\mathcal{M}},  \medskip\\
\frac{\partial e_\lambda}{\partial \nu}=0 \quad &\mbox{on} \ {\partial\mathcal{M}},
\end{array}
\right.
\label{Neumann}
\end{equation}
the sharp lower bounds and upper bounds of the nodal sets as (\ref{conjecture}) were shown in \cite{DF1}.
Doubling inequalities are crucial in deriving the measure of nodal sets.
For the Dirichlet eigenvalue problem (\ref{Diri}) or Neumann eigenvalue problem (\ref{Neumann}) of the Laplacian, one is able to construct a doubling manifold by an odd or even extension of eigenfunctions to get rid of the boundary. Then one can derive the doubling inequalities in the double manifold using Carleman estimates, see \cite{DF1}.

In analogy to the biharmonic Steklov eigenvalue problems, the Steklov eigenvalue problem for Laplacian is given by
 \begin{equation}
\left \{ \begin{array}{rll}
\triangle e_\lambda=0 \quad &\mbox{in} \ {\Omega},  \medskip\\
\frac{\partial e_\lambda}{\partial \nu}=\lambda e_\lambda \quad &\mbox{on} \ {\partial\Omega}.
\end{array}
\right.
\label{Stek}
\end{equation}
The study of nodal sets for Steklov eigenfunctions was initiated in \cite{BL}.
The sharp upper bounds of interior nodal sets of eigenfunctions (\ref{Stek}) on real analytic surface was shown in \cite{PST}. The sharp upper bounds of interior nodal sets for Steklov eigenfunctions was generalized to any dimensions by Zhu in \cite{Zh5}. The sharp upper bounds of boundary nodal sets of eigenfunctions (\ref{Stek}) was obtained by Zelditch in \cite{Z1}. Interested readers may also refer to some other literature on the lower bounds or upper bounds of nodal sets of Steklov eigenfunctions, see e.g. \cite{WZ}, \cite{SWZ}, \cite{Zh1}, \cite{Zh4}, and other related topics in \cite{BG}, \cite{GT}. To obtain the upper bounds of nodal sets in \cite{BL} and \cite{Zh5}, an auxiliary function was introduced to reduce the Steklov eigenvalue problem (\ref{Stek}) into an elliptic equation with Neumann boundary condition. Then one is able to construct the double manifold by an even extension. The doubling inequalities are derived on the double manifold using Carleman estimates.

This aforementioned strategy does not seem to be applicable for those bi-Laplace operators,
general
eigenvalue problems (\ref{bistek1})--(\ref{dirilap}), or even Laplace operators
with Robin type boundary conditions involving boundary potential functions, since the double manifold is not
available. Another way to obtain the bounds of nodal sets is to use analytical continuation of the wave kernel, which is a global method applying the machinery of Fourier integral operators, see e.g. \cite{Z}, \cite{TZ}. This approach was adapted to prove the upper bound of boundary nodal sets of Steklov eigenfunctions in \cite{Z1}. The biharmoinc Steklov eigenfunctions in (\ref{bistek1})--(\ref{bistek3}) can be considered as eigenfunctions of elliptic pseudo-differential operators on the boundary.
It seems that the method can give the upper bounds of nodal sets of eigenfunction (\ref{bistek1})--(\ref{bistek3}) in a tubular  neighborhood of the boundary, but may need further analysis in the deep interior away from the tubular neighborhood. Since the doubling inequalities are local estimates, we aim to apply the doubling inequalities in the domain including the deep interior.
We adopt a approach to obtain the doubling inequalities in the domain, which is applicable for general eigenvalue problems. Our
strategy works as follows. Combining a lifting argument and analyticity results, we can
do a real analytic continuation for eigenfuctions in an extended domain and the extended
functions have some controlled growth. By this lifting argument, we hide the dependence of eigenvalues in the analytic continuation argument so that the extended domain is independent of the eigenvalues. Furthermore, it provides the growth control of the extended function, See Proposition 1 below for more details. Because of the extension, we do not need to distinguish between the tubular neighborhood and the deep interior. Relied on the growth control estimates and Carleman estimates, we are able to provides the doubling inequalities in the domain including the deep interior.  The measure of nodal sets follows from the doubling inequalities and the complex growth lemma.

For those biharmonic Steklov eigenvalue problems (\ref{bistek1})--(\ref{bistek3}), some polynomial lower bound estimates for nodal sets of eigenfunctions $e_\lambda$ in smooth manifolds in spirit of \cite{SZ}, \cite{WZ}, and \cite{SWZ} was obtained by Chang in \cite{C}. We can show the following results on the upper bounds of the measure of nodal sets on real analytic domains.
\begin{theorem}
Let $e_\lambda$ be the eigenfunction in (\ref{bistek1}), (\ref{bistek2}) or (\ref{bistek3}). There exists a positive constant $C$ depending only on the real analytic domain $\Omega$ such that
\begin{equation} H^{n-1}(\{x\in \Omega |e_\lambda(x)=0\})\leq C\lambda.
 \end{equation}
\label{th1}
\end{theorem}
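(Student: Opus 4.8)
The plan is to reduce Theorem~\ref{th1} to three ingredients, exactly as the introduction anticipates: (a) a real analytic continuation of $e_\lambda$ to a complex neighbourhood of $\overline{\Omega}$ whose size is \emph{independent of $\lambda$} and whose growth is at most $e^{C\lambda}$ measured against a fixed norm of $e_\lambda$ (this is Proposition~1); (b) an interior doubling inequality for $e_\lambda$ with doubling index $O(\lambda)$, valid uniformly in $\Omega$; and (c) the passage from the doubling index to the bound on $H^{n-1}$ via the complex growth lemma. The starting observation is that in each of \eqref{bistek1}--\eqref{bistek3} the interior equation $\triangle^2 e_\lambda=0$ has constant (hence real analytic) coefficients and no $\lambda$, so $e_\lambda$ is automatically real analytic in $\Omega$; all of the $\lambda$-dependence sits in the Steklov-type boundary conditions, carrying the weights $\lambda$, $\lambda$ and $\lambda^3$ respectively.

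For step (a) I would run the lifting argument: introduce an auxiliary variable $t$ and replace $e_\lambda$ by a function $u(x,t)$ on the real analytic product domain $\Omega\times I$, with $I$ a fixed interval, which solves a higher order elliptic equation with real analytic, \emph{$\lambda$-free} coefficients, restricts to $e_\lambda$ at $t=0$, and whose boundary conditions on $(\partial\Omega)\times I$ no longer involve $\lambda$ — the $\lambda$-weights having been absorbed into the $t$-profile of $u$ (schematically, a superposition of exponentials in $t$ whose rates depend on $\lambda$), at the cost of an estimate $\|u\|_{L^\infty(\Omega\times I)}\le e^{C\lambda}\|e_\lambda\|$ for a fixed norm of $e_\lambda$. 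Applying the Morrey--Nirenberg analytic estimates to this $\lambda$-free elliptic boundary value problem yields a holomorphic extension of $u$ to a fixed complex neighbourhood $\mathcal{U}$ of $\overline{\Omega\times I}$ — fixed precisely because the extended problem no longer sees $\lambda$ — with $\sup_{\mathcal{U}}|u|\le e^{C\lambda}\|e_\lambda\|$; restricting to $t=0$ gives the claimed extension of $e_\lambda$, which is Proposition~1. For step (b) I would feed this growth bound into Carleman estimates for $\triangle^2$ (equivalently, quantitative three-ball / propagation-of-smallness inequalities) on the extended domain: chaining three-ball inequalities along paths contained in the $\lambda$-independent complex, hence real, neighbourhood of $\overline{\Omega}$ promotes the growth bound to a genuine doubling inequality $\sup_{B_{2r}(x_0)}|e_\lambda|\le e^{C\lambda}\sup_{B_{r}(x_0)}|e_\lambda|$ for all $x_0\in\overline{\Omega}$ and all admissible $r$, with $C=C(\Omega)$. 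The whole point of having a $\lambda$-independent extended domain is that the same covering by balls works uniformly near $\partial\Omega$ and in the deep interior, so one never has to isolate a shrinking tubular neighbourhood; in particular the doubling index of $e_\lambda$ in $\Omega$ is $N\le C\lambda$.

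For step (c) I would invoke the complex growth lemma: a doubling index $N$ forces the vanishing order of $e_\lambda$ to be $O(N)$ at every interior point and controls the growth of the holomorphic extension from step (a); Jensen's formula then bounds by $CN$ the number of zeros of that extension on any complex line meeting $\Omega$, and the integral-geometric (Crofton) formula, integrating this count over the family of complex lines through $\Omega$, bounds $H^{n-1}(\{x\in\Omega:\ e_\lambda(x)=0\})$ by $CN$. Combining this with $N\le C\lambda$ from step (b) gives $H^{n-1}(\{x\in\Omega:\ e_\lambda(x)=0\})\le C\lambda$, which is the assertion.

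The crux — and the point where the three problems \eqref{bistek1}--\eqref{bistek3} must be handled with care — is step (a). One has to design the lift so that the resulting equation on $\Omega\times I$ is simultaneously $\lambda$-free, elliptic, and posed on a domain that remains real analytic, so that all three Steklov-type conditions (including the $\lambda^3$-weighted one in \eqref{bistek3}) turn into $\lambda$-free boundary conditions for $u$, and so that the overgrowth incurred is exactly exponential in $\lambda$. Only then do the Morrey--Nirenberg analytic estimates deliver a complex neighbourhood of $\overline{\Omega}$ whose size does not degenerate as $\lambda\to\infty$, which is precisely what pins the final estimate at the order $\lambda$ rather than a larger power. Once Proposition~1 is in place, steps (b) and (c) are, by now, fairly standard.
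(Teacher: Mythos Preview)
Your proposal is correct and follows essentially the same route as the paper: Proposition~1 via a lifting that removes $\lambda$ from the boundary data and yields a $\lambda$-independent analytic extension with $e^{C\lambda}$ growth, then Carleman-based three-ball and doubling inequalities (Proposition~2), and finally the complex growth lemma plus integral geometry. The only implementation detail you leave schematic is that the paper uses \emph{two} auxiliary variables, $u(x,t,s)=e^{\lambda t}e^{\sqrt{i}\lambda s}e_\lambda(x)$, because a single lift leaves a residual $\lambda^4 u$ in the equation once one insists on an operator (like $\triangle^2+\partial_t^4$) that is genuinely elliptic in the augmented variables---but this is exactly the kind of refinement your phrase ``superposition of exponentials'' anticipates.
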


The proof of Theorem \ref{th1} sets a model for our approach in obtaining the upper bounds of nodal sets in real analytic domains. Theorem \ref{th2} and \ref{th3} follow more or less the similar strategy. However, some different arguments are used to derive the doubling inequalities in these theorems.
For the  bi-Laplace buckling eigenvalue problem, we can show the following upper bounds.
\begin{theorem}
Let $e_\lambda$ be the eigenfunction in (\ref{bilap}). There exists a positive constant $C$ depending only on the real analytic domain $\Omega$ such that
\begin{equation} H^{n-1}(\{x\in \Omega |e_\lambda(x)=0\})\leq C\sqrt{\lambda}. \end{equation}
\label{th2}
\end{theorem}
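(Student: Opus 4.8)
The plan is to follow the model set in the proof of Theorem~\ref{th1}: reduce the problem to a doubling inequality for $e_\lambda$ on balls inside $\Omega$, show that this doubling inequality holds with doubling exponent of order $\sqrt{\lambda}$ (rather than $\lambda$), and then feed this exponent into the complex analytic growth lemma exactly as in the proof of Theorem~\ref{th1} to get $H^{n-1}(\{x\in\Omega:e_\lambda(x)=0\})\le C\sqrt{\lambda}$. The passage from a doubling inequality to a bound on the nodal set is insensitive to whether the doubling exponent is $\lambda$ or $\sqrt{\lambda}$, so everything hinges on producing the sharper exponent for the buckling eigenfunction.

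The square-root saving comes from the factorization $\triangle^2+\lambda\triangle=\triangle(\triangle+\lambda)$. Put $v_\lambda:=\triangle e_\lambda$; the equation in (\ref{bilap}) gives $\triangle v_\lambda+\lambda v_\lambda=0$, so $v_\lambda$ is an honest Helmholtz eigenfunction with eigenvalue $\lambda$, hence of frequency $\sqrt{\lambda}$, while $\triangle e_\lambda+\lambda e_\lambda$ is harmonic; thus $\lambda e_\lambda$ is the difference of a harmonic function and a frequency-$\sqrt{\lambda}$ eigenfunction. I would encode this by lifting to one extra variable: using the Helmholtz extension $V(x,t):=\triangle e_\lambda(x)\cosh(\sqrt{\lambda}\,t)$, which is harmonic in $\Omega\times\mathbb R$, and setting
\begin{equation*}
E(x,t):=e_\lambda(x)-\frac{1}{\lambda}\bigl(V(x,t)-V(x,0)\bigr)=e_\lambda(x)-\frac{1}{\lambda}\,\triangle e_\lambda(x)\,\bigl(\cosh(\sqrt{\lambda}\,t)-1\bigr),
\end{equation*}
one checks directly, using $\triangle^2 e_\lambda=-\lambda\triangle e_\lambda$ and $\partial_t^2\cosh(\sqrt{\lambda}\,t)=\lambda\cosh(\sqrt{\lambda}\,t)$, that $\triangle_{x,t}E=0$ in $\Omega\times\mathbb R$, while $E(x,0)=e_\lambda(x)$. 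Two features are decisive. First, the extended equation $\triangle_{x,t}E=0$ carries no eigenvalue, so this lift plays, for (\ref{bilap}), the role of the construction behind Proposition~1: the extended setting $\Omega\times\mathbb R$ is eigenvalue-free and $\lambda$-independent (one never needs to shrink it with $\lambda$), and near $\partial\Omega$ one uses the extension/boundary device of Theorem~\ref{th1} together with the vanishing Cauchy data $e_\lambda=\partial_\nu e_\lambda=0$. Second, all the eigenvalue dependence now sits in the factor $\cosh(\sqrt{\lambda}\,t)$; since $|\cosh(\sqrt{\lambda}\,t)-1|\le\tfrac12\lambda t^2 e^{\sqrt{\lambda}|t|}$ and, by interior elliptic estimates for the fourth-order equation in (\ref{bilap}) (which confine the frequency of $e_\lambda$ to $O(\sqrt{\lambda})$), $\|\triangle e_\lambda\|_{L^2(B_r)}\le C\lambda\,\|e_\lambda\|_{L^2(B_{2r})}$, one obtains the growth control $\sup_{\Omega\times(-T,T)}|E|\le e^{C\sqrt{\lambda}}\sup_{\Omega}|e_\lambda|$ for fixed $T$, the polynomial-in-$\lambda$ factors being absorbed into the exponential.

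With $E$ a harmonic function on a $\lambda$-independent slab satisfying this exponential growth bound, the standard eigenvalue-free Carleman estimates for $\triangle_{x,t}$ yield three-ball and doubling inequalities for $E$ with doubling exponent $O(\sqrt{\lambda})$ on balls well inside the slab. Restricting to the hyperplane $\{t=0\}$, where $E=e_\lambda$, and comparing $n$- and $(n+1)$-dimensional $L^2$ integrals over a thin slab $|t|\le c\lambda^{-1/2}$ — on which $\|E(\cdot,t)-e_\lambda\|_{L^2}\le\tfrac12\|e_\lambda\|_{L^2}$ on the relevant balls, by the two estimates just quoted — transfers this to
\begin{equation*}
\int_{B_{2r}(x_0)}e_\lambda^2\,dx\le e^{C\sqrt{\lambda}}\int_{B_r(x_0)}e_\lambda^2\,dx \qquad\text{whenever }B_{2r}(x_0)\subset\Omega .
\end{equation*}
Near $\partial\Omega$ one argues as in Theorem~\ref{th1}, so that the doubling inequality holds uniformly throughout $\Omega$; inserting the exponent $C\sqrt{\lambda}$ into the complex growth lemma (as in the proof of Theorem~\ref{th1}) gives the asserted bound.

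The main obstacle is precisely to obtain the exponent $\sqrt{\lambda}$ and not $\lambda$. One cannot simply apply a fourth-order Carleman estimate to $\triangle^2 e_\lambda=-\lambda\triangle e_\lambda$ and absorb the right-hand side: this forces the Carleman parameter $\tau\gtrsim\lambda^{2/3}$ and yields only $e^{C\lambda^{2/3}}$; it is the factorization — realized here as a lift to a strictly harmonic function with a $\cosh(\sqrt{\lambda}\,t)$ prefactor and an eigenvalue-free extended operator — that recovers the sharp rate. A second, more bookkeeping-type difficulty is that $\triangle e_\lambda$ and $e_\lambda$ need not be comparable on a given ball (the harmonic component $\triangle e_\lambda+\lambda e_\lambda$ may dominate, or be dominated by, $v_\lambda$ there), so the interior elliptic estimates, the lift, and the normalization of $e_\lambda$ must be kept carefully aligned, lest a polynomial-in-$\lambda$ loss incurred in passing from $\|\triangle e_\lambda\|$ to $\|e_\lambda\|$ or in the slice-to-slice comparison end up being exponentiated.
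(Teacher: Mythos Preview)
Your lift $E(x,t)=e_\lambda(x)-\lambda^{-1}\triangle e_\lambda(x)\bigl(\cosh(\sqrt{\lambda}\,t)-1\bigr)$ is indeed harmonic in $(x,t)$, and the overall plan of harvesting a doubling exponent $C\sqrt{\lambda}$ from the $\cosh(\sqrt{\lambda}\,t)$ growth is reasonable. But the transfer step from doubling for $E$ to doubling for $e_\lambda$ contains a real gap, not just bookkeeping. On the thin slab $|t|\le c\lambda^{-1/2}$ you need $\|E(\cdot,t)-e_\lambda\|_{L^2(B_r)}\le\tfrac12\|e_\lambda\|_{L^2(B_r)}$, which unwinds to $\|\triangle e_\lambda\|_{L^2(B_r)}\lesssim\lambda\|e_\lambda\|_{L^2(B_r)}$ on the \emph{same} ball. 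The Caccioppoli estimate you invoke only gives $\|\triangle e_\lambda\|_{L^2(B_r)}\le C\lambda\|e_\lambda\|_{L^2(B_{2r})}$; the lost factor of two in the radius is exactly the doubling you are trying to prove, so the argument is circular. The loss here is not polynomial in $\lambda$ (which could be absorbed) but geometric in the radius; attempting to close it by iterating the $E$-doubling down to scale $c\lambda^{-1/2}$ costs an extra $\log\lambda$ in the exponent, and no amount of normalization of $e_\lambda$ alone fixes the mismatch, because $\triangle e_\lambda$ and $e_\lambda$ are genuinely independent local data for a fourth-order solution.

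The paper avoids this transfer altogether and, contrary to your last paragraph, \emph{does} work with a fourth-order Carleman estimate --- but not by treating $-\lambda\triangle e_\lambda$ as a right-hand side to be absorbed. Instead it writes $\triangle^2+\lambda\triangle=(\triangle+\lambda)\triangle$ and iterates the second-order Carleman inequality (\ref{similar}) once for each factor, obtaining (Lemma~\ref{prob}) a Carleman estimate for the full operator $\triangle^2+\lambda\triangle$ valid as soon as $\tau>C(1+\sqrt{\lambda})$. The threshold $\sqrt{\lambda}$ comes from the factor $\triangle+\lambda$ in (\ref{similar}); there is no $\lambda^{2/3}$ barrier. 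This estimate is then applied directly to $e_\lambda$ in the analytically extended domain $\widetilde{\Omega}$ (obtained via the lifting $u(x,t,s)=e^{\sqrt{\lambda}t}e^{i\sqrt{\lambda}s}e_\lambda(x)$ and Morrey--Nirenberg, exactly as in Proposition~\ref{pro1}), yielding the three-ball inequality (Lemma~\ref{baa}) and the doubling inequality for $e_\lambda$ itself, with exponent $C\sqrt{\lambda}$, and with no need to compare $e_\lambda$ and $\triangle e_\lambda$ on a common ball.
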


For the clamped-plate eigenvalue problem, the following upper bounds can be derived.
\begin{theorem}
Let $e_\lambda$ be the eigenfunction in (\ref{dirilap}). There exists a positive constant $C$ depending only on the real analytic domain $\Omega$ such that
\begin{equation} H^{n-1}(\{x\in \Omega |e_\lambda(x)=0\})\leq C\lambda^{\frac{1}{4}}.
 \end{equation}
\label{th3}
\end{theorem}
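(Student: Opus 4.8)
The plan is to run the same four-step scheme used for Theorem \ref{th1}: (i) a lifting argument that trades the $\lambda$-dependent equation for a $\lambda$-free elliptic equation in one extra variable, so that the real-analytic continuation of $e_\lambda$ can be carried out in a complex neighbourhood whose size is independent of $\lambda$; (ii) a growth bound $e^{C\lambda^{1/4}}$ for that continuation; (iii) a doubling inequality of index $\lesssim\lambda^{1/4}$ reaching into the deep interior, obtained by combining (ii) with Carleman/three-ball estimates for the lifted equation; and (iv) the nodal bound, extracted from the doubling inequality via the complex growth lemma and the integral-geometric formula. The reason the exponent is $\tfrac14$ is that the clamped-plate operator factors as $\triangle^2-\lambda=(\triangle-\sqrt\lambda)(\triangle+\sqrt\lambda)$, so the natural frequency scale is $\lambda^{1/4}$, not $\sqrt\lambda$ (buckling) or $\lambda$ (biharmonic Steklov).

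For the lifting, set $\mu=\lambda^{1/4}$ and pick a profile solving $f''''=-f$, for instance $f(\tau)=e^{\tau/\sqrt2}\cos(\tau/\sqrt2)$ with $f(0)=1$. Then $\mathcal{E}(x,t):=e_\lambda(x)\,f(\mu t)$ on $\Omega\times\R$ satisfies $\triangle_x^2\mathcal{E}=\lambda\mathcal{E}$ and $\partial_t^4\mathcal{E}=-\lambda\mathcal{E}$, hence
\[
(\triangle_x^2+\partial_t^4)\,\mathcal{E}=0 \quad\text{in }\ \Omega\times\R ,
\]
a fourth-order constant-coefficient elliptic equation with no $\lambda$ in it, while the clamped conditions lift to $\mathcal{E}=\partial_\nu\mathcal{E}=0$ on $\partial\Omega\times\R$ (with $\nu$ the horizontal outer normal of $\Omega$). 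One checks that $(\triangle_x^2+\partial_t^4,\{u,\partial_\nu u\})$ satisfies the Lopatinski--Shapiro complementing condition and that $\partial\Omega\times\R$ is real analytic; so the Morrey--Nirenberg analytic estimates up to the boundary apply, with a radius of analyticity bounded below by a constant depending only on $\Omega$ (operator and domain being $\lambda$-free). Since $|\mathcal{E}(x,t)|\le e^{\mu|t|/\sqrt2}\,\|e_\lambda\|_{L^\infty(\Omega)}$, this produces a holomorphic extension $\widetilde e_\lambda$ of $e_\lambda$ to a $\lambda$-independent complex neighbourhood $\widetilde\Omega$ of $\overline\Omega$ with $\sup_{\widetilde\Omega}|\widetilde e_\lambda|\le e^{C\lambda^{1/4}}\|e_\lambda\|_{L^\infty(\Omega)}$ — the analog of Proposition 1 for this equation. (Equivalently, one may use the factorization directly: $v:=(\triangle-\sqrt\lambda)e_\lambda$ and $w:=(\triangle+\sqrt\lambda)e_\lambda$ solve $\triangle v=-\sqrt\lambda\,v$, $\triangle w=\sqrt\lambda\,w$, the functions $v(x)\cosh(\lambda^{1/4}t)$ and $w(x)\cos(\lambda^{1/4}t)$ are harmonic in $\Omega\times\R$, and $e_\lambda=(w-v)/(2\sqrt\lambda)$, giving the same extension and bound.)

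Next I would upgrade this to a doubling inequality in all of $\Omega$. Normalize $\|e_\lambda\|_{L^\infty(\Omega)}=1$ and fix $x^\ast\in\overline\Omega$ with $|e_\lambda(x^\ast)|=1$, so $\mathcal{E}$ has unit size near $(x^\ast,0)$. A three-ball/propagation-of-smallness inequality for the $\lambda$-free operator $\triangle_x^2+\partial_t^4$ — from a Carleman estimate for this fourth-order operator, or by reducing to the Laplacian through the constant-coefficient factorization — has $\lambda$-free constants; chaining it from $(x^\ast,0)$ across $\Omega\times(-a,a)$ and using the global bound $|\mathcal{E}|\le e^{C\lambda^{1/4}}$ gives $\sup_{B_r((x_0,0))}|\mathcal{E}|\ge e^{-C\lambda^{1/4}}$ for every $B_r(x_0)\subset\Omega$. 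Because $|\mathcal{E}(x,t)|\le e^{\mu|t|/\sqrt2}|e_\lambda(x)|$, this descends to $\sup_{B_r(x_0)}|e_\lambda|\ge e^{-C\lambda^{1/4}}$, and together with $\sup_{B_{2r}(x_0)}|e_\lambda|\le\sup_{\widetilde\Omega}|\widetilde e_\lambda|\le e^{C\lambda^{1/4}}$ one gets
\[
\sup_{B_{2r}(x_0)}|e_\lambda|\le e^{C\lambda^{1/4}}\sup_{B_r(x_0)}|e_\lambda|,\qquad B_{2r}(x_0)\subset\Omega ,
\]
a doubling index $N\lesssim\lambda^{1/4}$. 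Finally, cover $\overline\Omega$ by a finite, $\lambda$-independent family of balls on each of which $\widetilde e_\lambda$ extends holomorphically with growth ratio $e^{CN}$ between concentric complex balls; restricting to complex lines and applying the complex growth lemma (Jensen's formula) bounds the number of zeros on each line by $CN$, and the integral-geometric formula then yields $H^{n-1}(\{x\in\Omega:e_\lambda(x)=0\})\le CN\le C\lambda^{1/4}$.

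The main obstacle is the boundary. There is no doubling-manifold available, and the clamped data $e_\lambda=\partial_\nu e_\lambda=0$ is only first-order Cauchy data, insufficient to continue a fourth-order equation across $\partial\Omega$ by the equation alone; hence both the analytic continuation past $\partial\Omega$ and the propagation of smallness from a collar of $\partial\Omega$ into the interior must be done for the \emph{lifted} $\lambda$-free problem, which is exactly why one must verify that $(\triangle_x^2+\partial_t^4,\{u,\partial_\nu u\})$ is a regular elliptic boundary value problem so that the up-to-the-boundary Morrey--Nirenberg analytic estimates and a boundary Carleman estimate both hold with constants independent of $\lambda$. Keeping every such constant $\lambda$-free, so that the only $\lambda$-dependence — the $e^{\lambda^{1/4}t/\sqrt2}$ amplitude of the lift — contributes precisely the exponent $\tfrac14$, is the delicate point; working directly with $\triangle^2 e_\lambda=\lambda e_\lambda$ would give an analyticity radius degrading like $\lambda^{-1/4}$ and the argument would break.
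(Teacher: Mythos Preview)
Your proposal is correct and follows essentially the same scheme as the paper: a lift to a $\lambda$-free fourth-order equation (the paper takes $u(x,t)=e^{\sqrt{i}\,\lambda^{1/4}t}e_\lambda(x)$, whose real part is precisely your profile $e^{\tau/\sqrt2}\cos(\tau/\sqrt2)$), Morrey--Nirenberg analytic continuation across $\partial\Omega$ with growth $e^{C\lambda^{1/4}}$, a doubling inequality of index $C\lambda^{1/4}$, and then the complex growth lemma together with integral geometry. The only cosmetic difference is that the paper derives the three-ball and doubling inequalities by running a Carleman estimate directly on the factored operator $(\triangle-\sqrt\lambda)(\triangle+\sqrt\lambda)$ in the extended domain $\widetilde\Omega$ (with parameter threshold $\tau>C\lambda^{1/4}$), rather than on the lifted $\lambda$-free operator as you propose; both routes produce the same $\lambda^{1/4}$ exponent.
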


Note that the different powers of $\lambda$ in Theorem 1--3 basically come from the rescaling argument. Hence, those are sharp upper bounds for the measure of nodal sets of eigenfunctions. For the nodal sets of higher order elliptic equations in real analytic domains, Kukavica \cite{Ku} showed another way to obtain the upper bounds of nodal sets of eigenfunctions based on a regularity result
by elliptic iterations
and an estimate on zero sets of real-analytic functions due to Donnelly-Fefferman \cite{DF}. It seems that such approach can not work for the biharmonic Steklov eigenvalue problem (\ref{bistek1})--(\ref{bistek3}) and bi-Laplace buckling eigenvalue problem (\ref{bilap}).

The organization of the article is as follows. Section
 2 is devoted to the upper bounds of nodal sets for biharmonic Steklov eigenfunctions (\ref{bistek1})--(\ref{bistek3}).  We first derive the real analytic continuation for eigenfunctions, then show the doubling inequalities. The vanishing order of eigenfunctions is obtained as a consequence of the doubling inequalities.
  In section 3, we prove the upper bounds of nodal sets for eigenfunctions of buckling problems (\ref{bilap}). Section 4 is used to show the upper bounds for  nodal sets for champed-plate problems. The upper bounds of nodal sets for eigenfunctions of higher order elliptic equations of arbitrary order with Dirichlet and Navier boundary conditions are also shown.
The letters $C$, $C_i$, $C_i(n, \partial\Omega)$ denote generic positive
constants that do not depend on $e_\lambda$ or $\lambda$, and may vary from line to
line. In the paper, since we study the asymptotic properties of eigenfunctions, we assume that the eigenvalue $\lambda$ is large. The approach of the paper for the nodal sets of eigenfunctions can be applied to real analytic  Riemannian  manifolds with boundary.

\section{Nodal sets of biharmonic Steklov eigenfunctions}
This section is devoted to obtaining the upper bounds of nodal sets  of biharmonic Steklov eigenfunctions. We first analytically extend $e_\lambda$ into a bigger domain that includes $\Omega$.
 We apply lifting arguments and the analyticity to do the real analytic continuation.
\begin{proposition}
Let $e_\lambda$ be the eigenfunction in (\ref{bistek1}), (\ref{bistek2}) or (\ref{bistek3}) in the real analytic bounded domain $\Omega$. Then $e_\lambda$ can be analytically extended to a bounded domain $\widetilde{\Omega} \supset\Omega$ and
\begin{align}
\|e_\lambda\|_{L^\infty(\widetilde{\Omega})}\leq e^{C\lambda}\|e_\lambda\|_{L^\infty(\Omega)}
\label{want}
\end{align}
for some $C$ depending only on $\partial\Omega$.
\label{pro1}
\end{proposition}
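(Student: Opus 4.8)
The plan is to deduce (\ref{want}) from a quantitative real-analytic continuation of $e_\lambda$ across $\partial\Omega$, and to strip the $\lambda$-dependence out of the radius of that continuation by a lifting trick. Introduce an auxiliary variable $t$ and set
\[
v(x,t):=e_\lambda(x)\,e^{\lambda t},\qquad (x,t)\in D:=\Omega\times(-2,2).
\]
Since $\triangle^2 e_\lambda=0$ in $\Omega$ and the factor $e^{\lambda t}$ is independent of $x$, we have $\triangle_x^2 v=0$ in $D$; moreover $\partial_t v=\lambda v$, hence $\partial_t^k v=\lambda^k v$ for all $k$. Using these identities the boundary conditions in (\ref{bistek1})--(\ref{bistek3}) turn into relations that no longer involve $\lambda$:
\begin{align*}
&\text{for (\ref{bistek1}): }\ v=0,\qquad \triangle_x v-\partial_\nu\partial_t v=e^{\lambda t}\bigl(\triangle e_\lambda-\lambda\,\partial_\nu e_\lambda\bigr)=0;\\
&\text{for (\ref{bistek2}): }\ v=0,\qquad \partial_\nu^2 v-\partial_\nu\partial_t v=e^{\lambda t}\bigl(\partial_\nu^2 e_\lambda-\lambda\,\partial_\nu e_\lambda\bigr)=0;\\
&\text{for (\ref{bistek3}): }\ \partial_\nu v=0,\qquad \partial_\nu(\triangle_x v)+\partial_t^3 v=e^{\lambda t}\bigl(\partial_\nu(\triangle e_\lambda)+\lambda^3 e_\lambda\bigr)=0,
\end{align*}
all on the lateral boundary $\partial\Omega\times(-2,2)$. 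Thus $v$ solves, in the bounded domain $D$ with real-analytic lateral boundary, a boundary value problem whose interior operator $\triangle_x^2$ and whose two boundary operators have real-analytic coefficients that are entirely independent of $\lambda$; the eigenvalue survives only in the (still smooth) dependence of $v$ on $t$.

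Next I would apply the Morrey-Nirenberg analyticity estimates (analyticity up to the boundary for solutions of elliptic boundary value problems with real-analytic data) to this lifted problem. One checks directly that $\triangle_x^2$ is of order four and non-characteristic transversally to $\partial\Omega\times(-2,2)$ (the relevant direction being a normal $x$-direction), and that the pair of boundary operators satisfies the Lopatinskii-Shapiro complementing condition: after freezing the tangential covariables $(\xi',\tau)$ dual to the tangential $x$-variables and to $t$, this reduces to an elementary computation with the decaying solutions of $(\partial_{x_n}^2-|\xi'|^2)^2\hat v=0$, and the resulting lower bound is uniform in $(\xi',\tau)$ and independent of $\lambda$. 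The theorem then yields that $v$ is real-analytic up to $\overline D$ near $\partial\Omega\times(-1,1)$, with bounds $|\partial^\alpha v(p)|\le C\,\rho^{-|\alpha|}|\alpha|!\,\|v\|_{L^\infty(D)}$ there (for multi-indices $\alpha$ in the $n+1$ variables), where $C$ and $\rho>0$ depend only on $n$ and $\partial\Omega$. Summing the Taylor series of $v$ in the outward normal $x$-direction extends $x\mapsto v(x,0)$ real-analytically to $\widetilde\Omega:=\{x\in\R^n:\dist(x,\Omega)<\rho/2\}$, with $\|v(\cdot,0)\|_{L^\infty(\widetilde\Omega)}\le C'\|v\|_{L^\infty(D)}$. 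Since $v(x,0)=e_\lambda(x)$ and $\|v\|_{L^\infty(D)}\le e^{2\lambda}\|e_\lambda\|_{L^\infty(\Omega)}$, this gives $\|e_\lambda\|_{L^\infty(\widetilde\Omega)}\le C'e^{2\lambda}\|e_\lambda\|_{L^\infty(\Omega)}\le e^{C\lambda}\|e_\lambda\|_{L^\infty(\Omega)}$ for $\lambda$ large, which is (\ref{want}); by construction $\widetilde\Omega\supset\Omega$ depends only on $\partial\Omega$.

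The step I expect to require the most care is the deployment of the analyticity-up-to-the-boundary machinery for the lifted problem. Unlike the textbook situation, $\triangle_x^2$ is \emph{not} elliptic on $\R^{n+1}$, being degenerate in the $t$-direction, so one must argue through ellipticity in the direction normal to $\partial\Omega$, treating $t$ on the same footing as the tangential $x$-variables in the Morrey-Nirenberg induction; equivalently, one runs the continuation in the $x$-variables with $t$ as a parameter, using that $v$ is jointly real-analytic in $(x,t)$ in the interior so that the $\partial_t$ occurring in the boundary operators causes no difficulty. One must also track carefully that every constant produced along the way (the complementing-condition lower bound, the analyticity radius $\rho$, the extended domain $\widetilde\Omega$) is independent of $\lambda$; this uniformity is exactly what the lift buys, and it is the only place where the eigenvalue could otherwise re-enter. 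The remaining points (checking the complementing condition separately for each of the three boundary conditions; verifying $e_\lambda\in C^\infty(\overline\Omega)$ so that the theorem applies; and staying away from the corner $\partial\Omega\times\{\pm2\}$ by extending only near $\partial\Omega\times(-1,1)$) are routine.
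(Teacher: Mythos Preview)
Your strategy---lift to absorb $\lambda$ into an auxiliary variable, then invoke Morrey--Nirenberg analyticity up to the boundary---is exactly the paper's. The gap, which you yourself flag, is that after a \emph{single} lift the interior operator $\triangle_x^2$ is not elliptic on $\R^{n+1}$: its principal symbol $|\xi|^4$ vanishes along the whole line $\xi=0$, $\tau\neq 0$. This is not a technicality one can talk around by declaring $t$ ``tangential''. The ADN/Morrey--Nirenberg machinery needs proper ellipticity as a hypothesis; concretely, at tangential frequency $(\xi',\tau)=(0,\tau)$ with $\tau\neq 0$ your normal ODE degenerates to $\partial_{x_n}^4\hat v=0$, which has the quadruple real root $\xi_n=0$, so there is no decaying/growing splitting and the Lopatinskii determinant is not even defined. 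Your fallback suggestion, to freeze $t$ and extend in $x$ alone, immediately reintroduces $\lambda$: at fixed $t$ the datum $\partial_t v$ is $\lambda v$, and you are back to the original $\lambda$-dependent boundary condition for $e_\lambda$, which is precisely what the lift was meant to eliminate.

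The paper closes this gap with a \emph{second} lift. From $\hat u(x,t)=e^{\lambda t}e_\lambda(x)$ one has $\partial_t^4\hat u=\lambda^4\hat u$, so trivially $\triangle^2\hat u+\partial_t^4\hat u-\lambda^4\hat u=0$; a further substitution $u(x,t,s)=e^{\sqrt{i}\,\lambda s}\hat u(x,t)$ gives $\partial_s^4 u=-\lambda^4 u$, and hence
\[
\triangle^2 u+\partial_t^4 u+\partial_s^4 u=0\quad\text{in }\Omega\times\R\times\R,
\]
with the same $\lambda$-free boundary operators you wrote down. This operator \emph{is} uniformly elliptic on $\R^{n+2}$ (principal symbol $|\xi|^4+\tau^4+\sigma^4$), so Morrey--Nirenberg applies verbatim, the radius of analytic continuation is $\lambda$-independent, and the factor $e^{C\lambda}$ emerges at the end exactly as in your final paragraph, now from the prefactor $e^{\lambda t}e^{\sqrt{i}\lambda s}$. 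In short, your argument is one move away from being complete: add a second auxiliary variable to restore ellipticity, and the rest of what you wrote goes through.
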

 \begin{proof} Let us first consider the eigenvalue problem (\ref{bistek1}) as an example. Since $\Omega$ is a real analytic domain, by standard regularity theorems for elliptic equations, $e_\lambda(x)$ is real analytic in $\bar \Omega$, see e.g. \cite{MN} or section 6.6 in \cite{Mo}.
 We hope to extend $e_\lambda$ across the boundary $\partial\Omega$ analytically. Instead of examining the dependence of $\lambda$ in the extension, we provide an elementary way to get rid of the eigenvalue $\lambda$ on the boundary and perform the extension for a new equation without $\lambda$. We adopt the following lifting arguments. Let $$\hat{u}(x, t)=e^{\lambda t} e_\lambda(x).$$ Then the new function $\hat{u}(x, t)$ satisfies the following equation
 \begin{equation}
\left \{ \begin{array}{lll}
\triangle^2 \hat{u}+\partial^4_t \hat{u}-\lambda^4 \hat{u}  =0  \quad &\mbox{in} \ {\Omega}\times (-\infty, \infty),  \medskip\\
\hat{u}=\triangle \hat{u}- \frac{\partial^2 \hat{u}}{\partial t \partial \nu} =0 \quad &\mbox{on} \ {\partial\Omega}\times (-\infty, \infty)
\end{array}
\right.
\end{equation}
for any $t\in (-\infty, \infty)$.
 To remove the eigenvalue $\lambda$ in the equation, we perform another lifting argument. Let $${u}(x, t, s)=e^{\sqrt{i} \lambda s} \hat{u}(x, t)$$
for any $s\in  (-\infty, \infty)$,  where $i$ is the imaginary unit. Then ${u}(x, t, s)$ satisfies the equation
 \begin{equation}
\left \{ \begin{array}{lll}
\triangle^2 {u}+\partial^4_t {u}+\partial^4_s {u}  =0  \quad &\mbox{in} \ {\Omega}\times (-\infty, \infty)\times (-\infty, \infty),  \medskip\\
{u}=\triangle {u}- \frac{\partial^2 {u}}{\partial t \partial \nu} =0 \quad &\mbox{on} \ {\partial\Omega}\times (-\infty, \infty)\times (-\infty, \infty).
\end{array}
\right.
\label{newlift}
\end{equation}
 Note that the equation (\ref{newlift}) is uniformly elliptic. We apply Fermi coordinates near the boundary to flatten the boundary $\partial\Omega$.   We can find a small constant $\rho>0$ so that there exists a map $(x', \ x_n)\in \partial \Omega\times [0, \ \rho)\to \Omega$ sending $(x', \ x_n)$ to the endpoint $x\in \Omega$ with length $x_n$, which starts at $x'\in \partial \Omega$ and is perpendicular to $\partial\Omega$. Such map is a local diffeomorphism. Notice that $x'$ is the geodesic normal coordinates of $\partial\Omega$ and $x_n=0$ is identified locally as $\partial\Omega$. The metric takes the form
$$ \sum^n_{i,j=1} g_{ij} dx^i dx^j= dx_n^2+ \sum^{n-1}_{i,j=1} g'_{ij}(x', x_n) dx^i dx^j,   $$
where  $g'_{ij}(x', x_n)$ is a Riemannian metric on $\partial \Omega$ depending analytically on $x_n\in [0, \ \rho).$ In a neighborhood of the boundary, the Laplacian can be written as
\begin{align} \triangle =\sum^n_{i,j=1} g^{ij}\frac{\partial^2}{\partial x_i \partial x_j}+\sum^n_{i=1} q_i(x)\frac{\partial}{\partial x_i}
\label{newlap}
\end{align}
using local coordinates for $\partial \Omega$, where  $g^{ij}$ is the matrix with entries $(g^{ij})_{1<i\leq j<n-1}= (g'_{ij})^{-1}$ and $g^{nn}=1$ and $g^{nk}=g^{kn}=0$ for $k\not =n$. Moreover, $g^{ij}$ and $ q_i(x)$ are real analytic functions because $\partial\Omega$ is real analytic.
 For any $x_0\in \partial \Omega$, by rotation and translation, we may assume $x_0$ as the origin.
Introduce the ball as
\begin{align*}
\Omega_{R}=\{ (x, t, s)\in \mathbb R^{n+2}| |x|<R,  \ |t|<R, \ |s|<R\}
\end{align*}
and the half-ball as
\begin{align*}
\Omega^+_{R}=\{ (x, t, s)\in \mathbb R^{n+2} | |x|<R \ \mbox{with} \   x_n\geq 0, \ |t|<R, \ |s|<R\}.
\end{align*}

By rescaling, we may consider the function $u(x, t, s )$  locally in the half-ball with the flatten boundary by Fermi coordinates.  Thus, $u(x, t, s )$ satisfies
\begin{equation}
\left \{ \begin{array}{lll}
\triangle^2 u+\partial_t^4 u+\partial_s^4 u =0  \quad &\mbox{in} \ \Omega^+_{2},  \medskip\\
u=\triangle u- \frac{\partial^2 u}{\partial t\partial x_n} =0 \quad &\mbox{on} \  \Omega^+_{2}\cap\{x_n=0\}.
\end{array}
\right.
\label{backgo}
\end{equation}

We can check as in \cite{ADN} that (\ref{backgo}) is a uniformly elliptic equation with boundary conditions satisfying the complementing conditions. Notice also that the equation (\ref{backgo}) is independent of $\lambda$. By the analyticity results in \cite{MN}, \cite{Mo} (section 6.6), the solution $u(x, t, s)$ is analytic on $\Omega^+_{2}\cap\{x_n=0\}$ with radius of convergence exceeding some constant $\delta$ depending only on $\Omega$ and $n$. Thus, $u(x, t, s)$ can be analytically extended to $\Omega_{\delta}$. Moreover, we have
\begin{align}
\|u\|_{L^\infty(\Omega_{\delta})}\leq  C(n, \Omega)\|u\|_{L^\infty (\Omega^+_{2})}.
\label{ubb}
\end{align}
Since the boundary $\partial\Omega$ is compact and the equation is invariant under the translation with respect to the variable $t$ and $s$, applying those arguments in a finite number of neighborhoods that cover $\partial\Omega\times [-1, \ 1]\times [-1, \ 1]$, we can extend the eigenfunction $u(x, t, s)$ to a neighborhood $\widehat{\Omega}_1=\{(x, t, s)\in \mathbb R^{n+2}|
\dist(x, \Omega)\leq {\hat{C}(n, \partial\Omega)}, |t|\leq 1, \ |s|\leq 1\}$. Let $\widehat{\Omega}=\{(x, t, s)\in \mathbb R^{n+2}|
x\in\Omega, |t|\leq 2, \ |s|\leq 2\}$. It follows from (\ref{ubb}) that
\begin{align}
\|u\|_{L^\infty(\widehat{\Omega}_1)}\leq C(n, \partial\Omega) \|u\|_{L^\infty(\widehat{\Omega})}.
\label{firstexx}
\end{align}
By the uniqueness of the analytic continuation, it  follows that
\begin{align}
\triangle^2 u+\partial^4_t u+\partial^4_s u=0 \quad \mbox{in} \ \widehat{\Omega}_1.
\end{align}
From the definition $u(x,t,s)=e^{\lambda t}{ e^{\sqrt{i} \lambda s}} e_\lambda(x)$ and the uniqueness of the analytic continuation again, we have that
\begin{align}
\triangle^2 e_\lambda=0 \quad \mbox{in} \ {\widetilde{\Omega}},
\label{last}
\end{align}
where $\widetilde{\Omega}=\{ x\in \mathbb R^n|\dist(x, \Omega)\leq d\}$ for $d\leq {\hat{C}(n, \partial\Omega)}$. Furthermore,
 it is readily from (\ref{firstexx}) and definition of $u$ that
\begin{align}
\|e_\lambda\|_{L^\infty(\widetilde{{\Omega}})}\leq e^{C\lambda} \|e_\lambda\|_{L^\infty({\Omega})},
\label{firstex}
\end{align}
Therefore, the conclusion (\ref{want}) is achieved for eigenfunctions in (\ref{bistek1}).

For eigenvalue problems (\ref{bistek2}), we adopt the same approach. Let
\begin{align}
u(x, t, s)=e^{\lambda t} e^{\sqrt{i}\lambda s} e_\lambda(x).
\end{align}
Then $u(x, t, s)$ satisfies the equation
 \begin{equation}
\left \{ \begin{array}{lll}
\triangle^2 {u}+\partial^4_t {u}+\partial^4_s {u}  =0  \quad &\mbox{in} \ {\Omega}\times (-\infty, \infty)\times (-\infty, \infty),  \medskip\\
{u}=\frac{\partial^2 u}{\partial \nu^2}- \frac{\partial^2 {u}}{\partial t \partial \nu} =0 \quad &\mbox{on} \ {\partial\Omega}\times (-\infty, \infty)\times (-\infty, \infty).
\end{array}
\right.
\label{newlift2}
\end{equation}
The equation (\ref{newlift2}) is also a uniformly elliptic with boundary conditions satisfying the complementing conditions.
Following the procedure as performed for the eigenvalue problem (\ref{bistek1}), we can also analytically extend $u(x, t, s)$ across the boundary $\partial\Omega\times [-1, \ 1]\times [-1, \ 1]$ and obtain that
\begin{align}
\triangle^2 e_\lambda=0 \quad \mbox{in} \ {\widetilde{\Omega}}
\end{align}
which satisfies the controlled growth
\begin{align}
\|e_\lambda\|_{L^\infty({\widetilde{\Omega}})}\leq e^{C\lambda} \|e_\lambda\|_{L^\infty({\Omega})}.
\end{align}

For the eigenvalue problem (\ref{bistek3}), the same arguments apply as well. Again we choose
\begin{align}
u(x, t, s)=e^{\lambda t} e^{\sqrt{i}\lambda s} e_\lambda(x).
\end{align}
 Then  $u(x, t, s)$ satisfies the equation
 \begin{equation}
\left \{ \begin{array}{lll}
\triangle^2 {u}+\partial^4_t {u}+\partial^4_s {u}  =0  \quad &\mbox{in} \ {\Omega}\times (-\infty, \infty)\times (-\infty, \infty),  \medskip\\
\frac{\partial u}{\partial \nu}=\frac{\partial \triangle u}{\partial \nu}+\frac{\partial^3 {u}}{\partial t^3 } =0 \quad &\mbox{on} \ {\partial\Omega}\times (-\infty, \infty)\times (-\infty, \infty).
\end{array}
\right.
\label{newlift3}
 \end{equation}
 Thus, the estimates (\ref{firstex}) and (\ref{last}) holds for the eigenfunctions in (\ref{bistek3}).
This completes the proof of the Proposition.
\end{proof}

\begin{remark}
Such real analytic continuation result also holds for a range of eigenvalue problems with boundary. Obviously, it works for  Dirichlet eigenvalue problems (\ref{Diri}),  Neumann eigenvalue problems (\ref{Neumann}) for Laplacian, and Steklov eigenvalue problems (\ref{Stek}). The power $C\lambda$ of $e^{C\lambda}$ in (\ref{want})  is from the rescaling argument. The key ingredients of the proof are the lifting arguments and the analyticity results.
\end{remark}
To derive bounds of nodal sets for eigenfunctions, a crucial step is to obtain the doubling inequality estimates. Such estimates control the growth of eigenfunctions locally. To obtain the doubling inequalities,  three-ball inequalities are used.
Next we establish the three-ball inequality for $e_\lambda$. Then
we will establish doubling inequalities for balls centered at any point in $\Omega$. Carleman estimates are efficient tools to  obtain those three-ball inequalities and doubling inequalities. Another popular tool for those inequalities is the frequency function, see e.g. \cite{GL}. Let us introduce some notations.
  If not specified, $\|\cdot\|$ or $\|\cdot\|_R$ is denoted as the $L^2$ norm centered at the ball $\mathbb B_R$. Let $\phi(x)=-\ln r(x)+r^\epsilon(x)$ be the weight function, where $r(x)=|x-x_0|$ be the distance to some point $x_0\in\Omega$ and $0<\epsilon<1$ is some small number.  Such weight function $\phi(x)$ was introduced by H\"ormander in \cite{H1}. The following quantitative Carleman estimates were established in \cite{Zh3} for bi-Laplace operators.
\begin{lemma}
There exist positive constants $R_0$ and $C$,  such that, for any $x_0\in \Omega$, any smooth function $f\in C^\infty_0(\mathbb B_{R_0}(x_0)\backslash \mathbb B_{\delta}(x_0))$ with $0<\delta<R_0<1$  and $\tau>C$, one has
\begin{align}
C\|r^4 e^{\tau \phi} \triangle^2 f\|\geq \tau^3 \|r^\epsilon e^{\tau \phi} f\|+\tau^2 \delta^2 \|r^{-2} e^{\tau \phi} f\|.
\label{Carle}
\end{align}
 \label{lemmm}
\end{lemma}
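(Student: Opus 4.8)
The plan is to prove \eqref{Carle} in conjugated form. Set $w = e^{\tau\phi}f$ and $P_\tau := r^4 e^{\tau\phi}\triangle^2 e^{-\tau\phi}$, a fourth-order operator depending polynomially on $\tau$; then \eqref{Carle} is equivalent to the a priori bound $C\|P_\tau w\| \ge \tau^3\|r^\epsilon w\| + \tau^2\delta^2\|r^{-2}w\|$ for all $w\in C^\infty_0(\mathbb B_{R_0}(x_0)\setminus\mathbb B_\delta(x_0))$ and $\tau>C$. Since $\triangle^2$ is constant-coefficient we may take $x_0=0$ and work on $\mathbb R^n$, so the resulting constants depend only on $n$ and $\epsilon$.

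To analyze $P_\tau$ I would pass to logarithmic (Emden--Fowler) coordinates $x=e^{-t}\omega$, $t=-\ln r$, $\omega\in S^{n-1}$, in which $r^2\triangle$ becomes the $t$-translation-invariant operator $\mathcal L=\partial_t^2-(n-2)\partial_t+\triangle_{S^{n-1}}$, so that $r^4\triangle^2=\mathcal L^2+(4\partial_t+8-2n)\mathcal L$ is again translation-invariant in $t$. The weight becomes $\phi=t+e^{-\epsilon t}$; absorbing the Jacobian $e^{-nt}\,dt\,d\omega$ and the factor $e^{\tau t}=r^{-\tau}$ by scalar conjugations leaves a fourth-order operator on the cylinder $\mathbb R_t\times S^{n-1}$ whose only non-translation-invariant coefficients come from the rapidly decaying term $e^{-\epsilon t}=r^\epsilon$. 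Expanding $w$ in spherical harmonics $Y_k$ with $\triangle_{S^{n-1}}Y_k=-k(k+n-2)Y_k$, each mode reduces to an ordinary differential operator in $t$ with the large parameter $\tau$; its full symbol in the dual variables $(\sigma,k)$ is a quartic whose characteristic set is essentially $\{\sigma=0,\ \mu_k=\tau^2\}$, with $\mu_k=k(k+n-2)+(\tfrac{n-2}{2})^2$. Away from this set the conjugated operator is elliptic of order four, and Plancherel in $t$ together with summation over $k$ yields on that region an estimate strictly stronger than what \eqref{Carle} requires (in particular with no $r^\epsilon$-loss).

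The real work is concentrated on the degenerate modes with $\mu_k\approx\tau^2$. There I would split $P_\tau$ into symmetric and antisymmetric parts and estimate the commutator: the translation-invariant part of the symbol vanishes on the characteristic set, and positivity is restored only by the perturbation $r^\epsilon=e^{-\epsilon t}$ in H\"ormander's weight, whose contribution to the commutator has size $\sim\tau\epsilon\, r^\epsilon(\cdots)$ and degenerates precisely like $r^\epsilon$. Bookkeeping of this term produces the weakened lower bound $\tau^3\|r^\epsilon w\|$ in place of the $r^\epsilon$-free bound a non-critical weight would give. The extra term $\tau^2\delta^2\|r^{-2}w\|$ comes from retaining, rather than discarding, a term generated in the same integration by parts that carries the weight $r^{-2}$: it is harmless only because $\supp f\subset\mathbb B_{R_0}\setminus\mathbb B_\delta$ forces $r\ge\delta$, hence $\delta^2 r^{-2}\le1$, and it survives with a clean $\tau^2$ and no $\epsilon$-loss. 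Summing the one-dimensional estimates over $k$ by Plancherel on $S^{n-1}$ and undoing the conjugations returns \eqref{Carle}; the compact support of $f$ makes all genuine boundary terms at $r=\delta$ and $r=R_0$ vanish.

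I expect the main obstacle to be exactly this characteristic-set analysis: proving positivity of the fourth-order conjugated commutator uniformly in $\tau$ and $\delta$ along the modes $\mu_k\approx\tau^2$, where the critical (logarithmic) nature of the weight forces one to exploit the $r^\epsilon$ perturbation and to accept the attendant $r^\epsilon$ and power-of-$\tau$ losses. An alternative that sidesteps part of this is to iterate a sharp second-order H\"ormander--Carleman estimate for $\triangle$ twice, applied first to $\triangle f$ and then to $f$; this works but requires carefully threading the intermediate gradient terms, because the naive composition loses a power of $r$ between the two applications --- the direct conjugation argument is what keeps the bookkeeping of the two weight factors $r^\epsilon$ and $\delta^2 r^{-2}$ transparent.
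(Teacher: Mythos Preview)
The paper does not prove Lemma~\ref{lemmm} in-line but cites it from \cite{Zh3}; however, its proof of the closely related Lemma~\ref{prob} makes the intended method transparent, and it is precisely the ``alternative'' you mention last and then set aside: iterate the second-order Carleman estimate \eqref{similar} twice via the factorization $\triangle^2=\triangle\cdot\triangle$. Your concern that ``the naive composition loses a power of $r$ between the two applications'' is exactly what the weight identities in that proof are designed to absorb. Writing $r^4 e^{\tau\phi}=r^2 e^{(\tau-2)\phi}e^{2r^\epsilon}$ (with $e^{2r^\epsilon}$ bounded on $\mathbb B_{R_0}$), one applies \eqref{similar} with $V=0$ to $\triangle f$ to get $C\|r^4 e^{\tau\phi}\triangle^2 f\|\ge \tau^{3/2}\|r^{\epsilon/2}e^{(\tau-2)\phi}\triangle f\|$; then the elementary inequality $r^{\epsilon/2}e^{(\tau-2)\phi}\ge C\,r^2 e^{(\tau-\epsilon/2)\phi}$ restores precisely the $r^2$ weight required for a second application of \eqref{similar}, at the harmless cost of shifting $\tau$ by a bounded constant, yielding $\tau^3\|r^\epsilon e^{\tau\phi}f\|$; the $\tau^2\delta^2\|r^{-2}e^{\tau\phi}f\|$ term falls out the same way from the second summand on the right of \eqref{similar}. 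No intermediate gradient terms need to be threaded, because the first application already controls the full weighted $L^2$ norm of $\triangle f$.

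Your direct conjugation route --- Emden--Fowler coordinates, spherical-harmonic decomposition, and a mode-by-mode commutator analysis near the characteristic set $\mu_k\approx\tau^2$ --- is a legitimate and more self-contained strategy, and would recover \eqref{Carle} once the fourth-order commutator positivity you flag as the ``main obstacle'' is actually carried out. But it is the harder road here: the iteration reduces everything to the already-known second-order estimate in a few lines, with the $r^\epsilon$ and $\delta^2 r^{-2}$ weights appearing automatically from the corresponding terms in \eqref{similar} rather than from a delicate subprincipal-symbol computation.
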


Thanks to the Carleman estimates (\ref{Carle}), for $e_\lambda(x)$ in (\ref{last}),  it is standard to  establish the three-ball inequality
\begin{align}
\|e_\lambda\|_{L^2(\mathbb B_{2R}(x_0))}\leq C \|e_\lambda\|^\beta_{L^2(\mathbb B_{R}(x_0))}\|e_\lambda\|^{1-\beta}_{L^2(\mathbb B_{3R}(x_0))}
\label{threee}
\end{align}
for $0<R<R_0$, $x_0\in \Omega$ and $0<\beta<1$. We may choose $R_0<\frac{d}{10}$. Recall that $d=\dist(\Omega, \ \partial\widetilde{\Omega})$. Standard elliptic estimates imply the $L^\infty$ norm three-ball inequality. We still write it as
\begin{align}
\|e_\lambda\|_{L^\infty(\mathbb B_{2R}(x_0))}\leq C \|e_\lambda\|^\beta_{L^\infty(\mathbb B_{R}(x_0))}\|e_\lambda\|^{1-\beta}_{L^\infty(\mathbb B_{3R}(x_0))}.
\label{three}
\end{align} Similar arguments have also been carried out in Lemma \ref{baa}. Interested readers may refer to Lemma \ref{baa} in the paper or \cite{Zh3} for details of the argument.

For any $\hat{x}\in \Omega$, we will derive the estimate
\begin{align}
\|e_\lambda\|_{L^\infty(\mathbb B_{R}(\hat{x}))}\geq e^{-C(R){\lambda}}\|e_\lambda\|_{L^\infty(\tilde{\Omega})},
\label{claim}
\end{align}
where $C(R)$ is a positive constant depending on $R$. The estimate (\ref{claim}) is a quantitative result for the norm of $e_\lambda$ in any ball in $\tilde{\Omega}$ centered at some point in $\Omega$. We shall show (\ref{claim}) by iteration of the three-ball inequality.
Let $|e_\lambda(\bar x)|=\sup_{x\in \Omega}|e_\lambda(x)|$. We do a propagation of smallness using the three-ball inequality (\ref{three}) to get to $\bar x$ from $\hat{x}$.
Applying the three ball inequality (\ref{three}) at $\hat{x}$ and (\ref{want}) in Proposition 1, we have
\begin{align}
\|e_\lambda\|_{L^\infty(\mathbb B_{2R}(\hat{x}))}\leq e^{C(1-\beta){\lambda}} \|e_\lambda\|^\beta_{L^\infty(\mathbb B_{R}(\hat{x}))}\|e_\lambda\|^{1-\beta}_{L^\infty(\Omega)}.
\label{norma}
\end{align}
Without loss of generality, let us normalize $\|e_\lambda\|_{L^\infty(\Omega)}=1$. Then
\begin{align}
\|e_\lambda\|_{L^\infty(\mathbb B_{2R}(\hat{x}))}\leq e^{C{\lambda}} \|e_\lambda\|^\beta_{L^\infty(\mathbb B_{R}(\hat{x}))}.
\end{align}
Choose $x_1\in \mathbb B_{R}(\hat{x})$ such that $\mathbb B_R(x_1)\subset \mathbb B_{2R}(\hat{x})$, it follows that
\begin{align}
\|e_\lambda\|_{L^\infty(\mathbb B_{R}(x_1))}\leq e^{C{\lambda}} \|e_\lambda\|^\beta_{L^\infty(\mathbb B_{R}(\hat{x}))}.
\end{align}
The application of the three-ball inequality (\ref{three}) at $x_1$ yields that
\begin{align}
\|e_\lambda\|_{L^\infty(\mathbb B_{2R}(x_1))}\leq e^{C{\lambda}} \|e_\lambda\|^{\beta^2}_{L^\infty(\mathbb B_{R}(\hat{x}))}.
\end{align}
Fix such $R$, we choose a sequence of balls $\mathbb B_{R}(x_i)$ centered at $x_i$ such that $x_{i+1}\in \mathbb B_R(x_i)$ and $\mathbb B_R(x_{i+1})\subset \mathbb B_{2R}(x_i)$. After finitely many of steps, we could get to the point $\bar x$ where $e_\lambda(\bar x)=1$, that is, $\hat{x}, x_1, \cdots, x_m=\bar x$. The number of $m$ depends on $R$ and $diam(\Omega)$. Repeating the three-ball inequality (\ref{three}) at those $x_i$, $i=2, 3, \cdots, m$, we arrive at
\begin{align}
\|e_\lambda\|_{L^\infty(\mathbb B_{R}(x_m))}\leq e^{C{\lambda}} \|e_\lambda\|^{\beta^m}_{L^\infty(\mathbb B_{R}(\hat{x}))}.
\end{align}
Since $0<\beta<1$, we obtain that
\begin{align}
\|e_\lambda\|_{L^\infty(\mathbb B_{R}(\hat{x}))}&\geq e^{-\frac{C{\lambda}}{\beta^m}} \|e_\lambda\|_{L^\infty(\Omega)}\nonumber \\
&\geq e^{-{C(R){\lambda}}} \|e_\lambda\|_{L^\infty(\Omega)}.
\label{quant}
\end{align}
Thus, the estimate (\ref{claim}) is verified because of (\ref{want}). By rescaling, it also holds that
\begin{align}
\|e_\lambda\|_{L^\infty(\mathbb B_{\frac{R}{12}}(\hat{x}))}\geq e^{-C(R){\lambda}}\|e_\lambda\|_{L^\infty(\tilde{\Omega})}
\label{claimm}
\end{align}
for any $\hat{x}\in \Omega$ and $0<R<R_0$.

Define the annulus $ A_{R_1, \ R_2}({x_0}):=\{x\in \mathbb R^n| R_1\leq |x-x_0|\leq R_2\}$.
For any $x_0\in \Omega$, there exist some point $\hat{x}$ such that $\mathbb B_{\frac{R}{12}}(\hat{x}) \subset A_{\frac{R}{2}, \ \frac{2R}{3}}(x_0)$.
Therefore, (\ref{claimm}) also implies that
\begin{align}
\|e_\lambda\|_{L^\infty( A_{\frac{R}{2}, \ \frac{3R}{2}}({x_0}))}
&\geq e^{-{C(R){\lambda}}} \|e_\lambda\|_{L^\infty(\tilde{\Omega})}.
\label{exdouex}
\end{align}

Now we derive the quantitative doubling inequalities from Carleman estimates (\ref{Carle}), the estimates (\ref{claim}) and (\ref{exdouex}). See e.g. \cite{AMRV} for some qualitative doubling inequalities for solutions of elliptic systems.
\begin{proposition}
Let $e_\lambda$ be the eigenfunction in (\ref{bistek1}), (\ref{bistek2}) or (\ref{bistek3}). There exists a positive constant $C$ depending only on the real analytic domain $\Omega$ such that
\begin{equation}
\|e_\lambda\|_{L^\infty(\mathbb B_{2r}(x_0))}\leq e^{C\lambda}\|e_\lambda\|_{L^\infty(\mathbb B_{r}(x_0))}
\label{LLL}
\end{equation}
for any $x_0\in \overline{\Omega}$ and $0<r\leq \frac{d}{4}$.
\label{pro2}
\end{proposition}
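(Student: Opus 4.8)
The plan is to deduce (\ref{LLL}) from the three-ball inequality (\ref{three}) together with the lower bounds (\ref{claim})--(\ref{exdouex}); throughout normalize $\|e_\lambda\|_{L^\infty(\Omega)}=1$, so that Proposition \ref{pro1} gives $\|e_\lambda\|_{L^\infty(\widetilde\Omega)}\le e^{C\lambda}$ and $\triangle^2 e_\lambda=0$ in $\widetilde\Omega$. First I would dispose of the fixed scales. Fix $R_\ast\in(0,R_0)$. For $x_0\in\overline\Omega$ choose $\hat x\in\Omega$ with $|\hat x-x_0|<R_\ast/2$, so that $\mathbb B_{R_\ast/2}(\hat x)\subset\mathbb B_{R_\ast}(x_0)$; then (\ref{claim}), applied at $\hat x$ at the fixed radius $R_\ast/2$ (its exponent being then a $\lambda$-independent constant), gives $\|e_\lambda\|_{L^\infty(\mathbb B_{R_\ast}(x_0))}\ge e^{-C\lambda}\|e_\lambda\|_{L^\infty(\widetilde\Omega)}\ge e^{-C\lambda}$. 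Since $\mathbb B_{2r}(x_0)\subset\widetilde\Omega$, also $\|e_\lambda\|_{L^\infty(\mathbb B_{2r}(x_0))}\le e^{C\lambda}$, and combining the two already yields (\ref{LLL}) for every $r\in[R_\ast,d/4]$. It remains to treat $0<r<R_\ast$.

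For the small scales I would fix $x_0$, put
\[ N(R)=\log\frac{\|e_\lambda\|_{L^\infty(\mathbb B_{2R}(x_0))}}{\|e_\lambda\|_{L^\infty(\mathbb B_{R}(x_0))}},\qquad 0<R<\tfrac{R_0}{2}, \]
and propagate this local doubling exponent down the dyadic scales. Dividing the three-ball inequality (\ref{three}) (in $L^\infty$ form) by $\|e_\lambda\|_{L^\infty(\mathbb B_{R}(x_0))}$ and using $\|e_\lambda\|_{L^\infty(\mathbb B_{3R})}\le\|e_\lambda\|_{L^\infty(\mathbb B_{4R})}$ gives $N(R)\le\log C+(1-\beta)\bigl(N(R)+N(2R)\bigr)$, i.e.\ $N(R)\le A+\gamma\,N(2R)$ with $A=\tfrac{\log C}{\beta}$ and $\gamma=\tfrac{1-\beta}{\beta}$. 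The plan is to run (\ref{three}) in the form for which $\gamma<1$ (equivalently $\beta>\tfrac12$), so that iterating this recursion from scale $r$ up to a fixed reference scale $R_\sharp\in[R_\ast/4,R_\ast)$---at which $N(R_\sharp)\le C\lambda$ by the first step---sums geometrically to $N(r)\le\gamma^{J}N(R_\sharp)+\tfrac{A}{1-\gamma}\le C\lambda$ with $J\sim\log_2(R_\sharp/r)$, i.e.\ uniformly in $r$. Balls centered on $\partial\Omega$ are again handled by translating to a nearby interior point. This gives (\ref{LLL}) for all $0<r<R_\ast$.

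The genuinely delicate point---and the reason Lemma \ref{lemmm} is stated in the strong quantitative form it has---is the contraction $\gamma<1$. A soft, scale-invariant three-ball inequality has the Hadamard exponent as its best one (the extremizers being the homogeneous biharmonic polynomials), which yields only the borderline $\gamma=1$ and hence the weaker bound $N(r)\lesssim\lambda+\log(1/r)$, degenerating as $r\to0$. Extracting a true contraction forces one to use the extra gains encoded in (\ref{Carle})---the full power $\tau^{3}$ and the inner-sphere term $\tau^{2}\delta^{2}\|r^{-2}e^{\tau\phi}f\|$, together with H\"ormander's weight $\phi=-\ln r+r^{\epsilon}$---by testing (\ref{Carle}) on $f=\eta e_\lambda$, where $\eta$ equals $1$ on an annulus about $\{|x-x_0|\sim R\}$ and is supported between an inner radius $\delta\sim R$ and a fixed outer radius $\lesssim R_0$, with the outer transition annulus placed at strictly larger radius than the annulus $A_{R_0/4,\,3R_0/4}(x_0)$ on which (\ref{exdouex}) and (\ref{claimm}) give $\|e_\lambda\|\ge e^{-C\lambda}$. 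Then $\triangle^2 f=[\triangle^2,\eta]e_\lambda$ is supported in the two transition annuli, where interior elliptic estimates for $\triangle^2$ bound it by $C(\text{scale})^{-4}\|e_\lambda\|_{L^\infty}$ on a slightly enlarged concentric ball; since $\phi$ is decreasing in $r$, $\tau$ can be chosen so that the exponentially larger weight on the inner region dominates the outer-transition contribution, and optimizing in $\tau$ delivers the required comparison of norms on consecutive scales. The remaining ingredients---the $L^{2}$-to-$L^{\infty}$ upgrade and the reverse bound $\|e_\lambda\|_{L^{2}(\mathbb B_\rho)}\gtrsim\rho^{n/2}\|e_\lambda\|_{L^{\infty}(\mathbb B_{\rho/2})}$ for biharmonic functions---are routine.
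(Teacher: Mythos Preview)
Your treatment of the fixed scales $r\ge R_\ast$ is correct and matches the paper. The small-scale argument, however, has a structural issue. The recursion $N(R)\le A+\gamma N(2R)$ with $\gamma=(1-\beta)/\beta$ propagates usefully only if $\gamma<1$, i.e.\ $\beta>\tfrac12$, and the three-ball inequality (\ref{three}) you invoke does not provide this; its Carleman derivation (cf.\ the parallel computation in Lemma~\ref{baa}) produces an exponent $\beta$ well below $\tfrac12$. You flag the difficulty yourself, but your third paragraph---framed as ``extracting a true contraction''---does not actually yield a sharper $\beta$. What it sketches is in fact a different argument, one that makes the recursion unnecessary.

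The paper never iterates. It applies the Carleman estimate (\ref{Carle}) \emph{once}, with a cutoff $\psi$ equal to $1$ on the entire annulus $\tfrac{3\delta}{2}<r<R$ between an arbitrarily small inner radius $\delta$ and a \emph{fixed} outer radius $R=R_0/8$. The left side then carries both a term $\|e_\lambda\|_{R/2,\,2R/3}$ at the fixed scale and a term $\|e_\lambda\|_{3\delta/2,\,4\delta}$ at the small scale; choosing $\tau$ so that the outer commutator is absorbed by the former forces, via (\ref{exdouex}), only $\tau\le C\lambda$ \emph{independently of $\delta$}. What remains is
\[
\|e_\lambda\|_{4\delta}\ \le\ C\,\exp\bigl\{\tau\bigl(\phi(\delta)-\phi(4\delta)\bigr)\bigr\}\,\|e_\lambda\|_{2\delta},
\]
and since $\phi(\delta)-\phi(4\delta)=\ln 4+\delta^{\epsilon}-(4\delta)^{\epsilon}$ is bounded uniformly in $\delta$, this is already $\|e_\lambda\|_{4\delta}\le e^{C\lambda}\|e_\lambda\|_{2\delta}$ at every small scale in one stroke. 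Your third paragraph is close to this setup---inner radius $\delta\sim$ your $R$, fixed outer radius $\lesssim R_0$---but you should take $\eta\equiv 1$ on the whole intervening annulus (so that the left side of (\ref{Carle}) sees the fixed-scale piece needed to absorb the outer term), and then drop the dyadic recursion on $N$ altogether.
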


\begin{proof}
Let us fix $R=\frac{ R_0}{8}$, where $ R_0$ is the one in the three-ball inequality (\ref{three}). Let $0<\delta<\frac{R}{24}$ be arbitrarily small. Let $r(x)=|x-x_0|$. We introduce a smooth cut-off function $0<\psi<1$  as follows,
\begin{itemize}
\item $\psi(r)=0$ \ \ \mbox{if} \ $r(x)<\delta$ \ \mbox{or} \  $r(x)>2R$, \medskip
\item $\psi(r)=1$ \ \ \mbox{if} \ $\frac{3\delta}{2}<r(x)<R$, \medskip
\item $|\nabla^\alpha \psi|\leq \frac{C}{\delta^\alpha}$ \ \ \mbox{if} $\delta<r(x)<\frac{3\delta}{2}$,\medskip
\item $|\nabla^\alpha \psi|\leq C$ \ \  \mbox{if} \ $R<r(x)<2R$.
\end{itemize}
We apply the Carleman estimates (\ref{Carle}) to obtain the doubling inequalities. Replacing $f$ by $\psi e_\lambda$ and substituting it into (\ref{Carle}) yields that
\begin{align*}
 \tau^3\| r^{\epsilon} e^{\tau \phi}\psi  e_\lambda\|+ \tau^2\delta^2 \| r^{-2} e^{\tau \phi}\psi  e_\lambda\|
\leq C\| r^4 e^{\tau \phi}[ \triangle^2, \ \psi]  e_\lambda\|,
\end{align*}
where we have used the equation (\ref{last}) and $[\triangle^2, \ \psi]$ is a third order differential operator on $e_\lambda$ which involves the derivative of $\psi$. From the properties of $\psi$ and the fact that $\tau>1$, we have that
\begin{align*}
 \| r^{\epsilon} e^{\tau \phi} e_\lambda\|_{\frac{R}{2}, \frac{2R}{3}}+  \|  e^{\tau \phi} e_\lambda\|_{\frac{3\delta}{2}, 4\delta}
 &\leq C (\| e^{\tau \phi} e_\lambda\|_{\delta, \frac{3\delta}{2}}+\| e^{\tau \phi} e_\lambda\|_{R, 2R} ) \\
&+ C(\sum^{3}_{|\alpha|=1}\| r^{|\alpha|} e^{\tau \phi} \nabla^\alpha  e_\lambda\|_{\delta, \frac{3\delta}{2}}+\sum^{3}_{|\alpha|=1}\|  r^{|\alpha|} e^{\tau \phi} \nabla^\alpha e_\lambda\|_{R, 2R}),
\end{align*}
where the norm $\|\cdot\|_{R_1, R_2}=\|\cdot\|_{L^2( A_{R_1, R_2})}$.
Using the fact that the weight function $\phi$ is radial and decreasing, we could take the exponential function $e^{\tau\phi}$ out in these terms. We arrive at
\begin{align*}
e^{\tau \phi(\frac{2R}{3})}\|  e_\lambda\|_{\frac{R}{2}, \frac{2R}{3}}+ e^{\tau \phi({4\delta})}  \|  e_\lambda\|_{\frac{3\delta}{2}, 4\delta}
&\leq C (e^{\tau \phi(\delta) }\|  e_\lambda\|_{\delta, \frac{3\delta}{2}}+e^{\tau \phi(R) }\| e^{\tau \phi} e_\lambda\|_{R, 2R} ) \\
&+ C(e^{\tau \phi(\delta)}\sum^{3}_{|\alpha|=1} \| r^{|\alpha|} \nabla^\alpha e_\lambda\|_{\delta,
\frac{3\delta}{2}}+e^{\tau\phi(R)}\sum^{3}_{|\alpha|=1}\|r^{|\alpha|}  \nabla^\alpha e_\lambda\|_{R, 2R}).
\end{align*}
The use of  Caccioppoli type inequality for biharmonic equations implies that
\begin{align}
e^{\tau \phi(\frac{2R}{3})}\| e_\lambda\|_{\frac{R}{2}, \frac{2R}{3}}+ e^{\tau \phi({4\delta})}  \|   e_\lambda\|_{\frac{3\delta}{2}, 4\delta}
&\leq C  (e^{\tau \phi(\delta) }\|   e_\lambda\|_{2\delta}+e^{\tau\phi(R)}\| e^{\tau \phi} e_\lambda\|_{3R}).
\end{align}
See the Caccioppoli type inequality in (\ref{hihcac}) below or Lemma 1 in \cite{Zh5}.
Adding $ e^{\tau \phi({4\delta})} \|e_\lambda\|_{\frac{3\delta}{2}}$ to both sides of last inequality, we get that
\begin{align}
e^{\tau \phi(\frac{2R}{3})}\|  e_\lambda\|_{\frac{R}{2}, \frac{2R}{3}}+ e^{\tau \phi({4\delta})}  \| e_\lambda\|_{4\delta}
&\leq C (e^{\tau \phi(\delta) }\|  e_\lambda\|_{2\delta}+e^{\tau\phi(R)}\| e^{\tau \phi} e_\lambda\|_{3R}).
\label{drop}
\end{align}
We want to incorporate the second term in the right hand side of the last inequality into the left hand side. To this end,
we choose $\tau$ such that
$$C e^{\tau\phi(R)}\|e_\lambda\|_{3R}\leq \frac{1}{2}e^{\tau\phi(\frac{2R}{3})} \|e_\lambda\|_{\frac{R}{2}, \frac{2R}{3}}.  $$
That is, at least
\begin{align} \tau\geq \frac{1}{\phi(\frac{2R}{3})-\phi(R)}\ln \frac{ 2C \|e_\lambda\|_{3R}}{ \|e_\lambda\|_{\frac{R}{2}, \frac{2R}{3}}}.
\label{tautau}
\end{align}
For such $\tau$, we obtain that
\begin{equation}
e^{\tau \phi(\frac{2R}{3})}\| e_\lambda\|_{\frac{R}{2}, \frac{2R}{3}}+ e^{\tau \phi({4\delta})}  \|  e_\lambda\|_{4\delta}
\leq C e^{\tau \phi(\delta) }\|  e_\lambda\|_{2\delta}.
\label{droppd}
\end{equation}

To apply the Carleman estimates (\ref{Carle}), the assumption  that  $\tau\geq C $ for some $C$ independent of $\lambda$ is needed. In addition to the assumption (\ref{tautau}), we select
$$\tau=C+ \frac{1}{\phi(\frac{2R}{3})-\phi(R)}\ln \frac{ 2C \| e_\lambda\|_{3R}}{ \| e_\lambda\|_{\frac{R}{2}, \frac{2R}{3}}}. $$
Dropping the first term in (\ref{droppd}) gives that
\begin{align}
\|e_\lambda\|_{4\delta}&\leq C \exp\{ \big(C+ \frac{1}{\phi(\frac{2R}{3})-\phi(R)}\ln \frac{ 2C \|e_\lambda\|_{3R}}{ \|e_\lambda\|_{\frac{R}{2}, \frac{2R}{3}}}\big)\big(\phi(\delta)-\phi(4\delta)\big)  \}\|e_\lambda\|_{2\delta} \nonumber \\
&\leq {C }   (\frac{\|e_\lambda\|_{3R}}{ \|e_\lambda\|_{\frac{R}{2}, \frac{2R}{3}}})^C \|e_\lambda\|_{2\delta},
\label{tata}
\end{align}
where we have used the fact that
$$ \beta_1^{-1}<\phi(\frac{2R}{3})-\phi(R)<\beta_1,  $$
$$  \beta_2^{-1}< \phi(\delta)-\phi(4\delta)<\beta_2 $$
for some positive constants $\beta_1$ and $\beta_2$ that do not depend on $R$ or $\delta$. Since $\mathbb B_{3R}(x_0)\subset \tilde{\Omega}$,
it follows from (\ref{exdouex}) that
$$\frac{\|e_\lambda\|_{3R}}{ \|e_\lambda\|_{\frac{R}{2}, \frac{2R}{3}}}\leq e^{C\lambda}. $$
Thanks to the last inequality and (\ref{tata}), since $R$ is fixed, we derive that
\begin{align}\|e_\lambda\|_{4\delta}\leq  e^{C\lambda} \|e_\lambda\|_{2\delta} \end{align}
for some $C$ depending only on $\Omega$.
Let $\delta=\frac{r}{2}$. The doubling inequality
\begin{equation}
\|e_\lambda\|_{2r}\leq  e^{C\lambda} \|e_\lambda\|_{r}
\label{doub1}
\end{equation}
follows for $r\leq \frac{R}{12}$. If $\frac{R}{12}\leq r\leq \frac{d}{4}$, using (\ref{claim}), we can show that
\begin{align}
\|e_\lambda\|_{2r}&\geq \|e_\lambda\|_{\frac{R}{6}} \nonumber \\
&\geq e^{C(R)\lambda}\|e_\lambda\|_{\Omega_2}\nonumber \\
&\geq e^{C\lambda}\|e_\lambda\|_{r}.
\label{doub2}
\end{align}
Together with (\ref{doub1}) and (\ref{doub2}), we derive that
\begin{equation}
\|e_\lambda\|_{{2r}}\leq  e^{C\lambda} \|e_\lambda\|_{{r}}
\end{equation}
for any $0<r\leq \frac{d}{4}$ and $x_0\in \overline{\Omega}$, where $C$ only depends on the $\partial\Omega$. By standard elliptic estimates, the $L^\infty$ norm of doubling inequalities follows.
\end{proof}

An easy consequence of the doubling inequality (\ref{LLL}) is a vanishing order estimate for eigenfunctions $e_\lambda$ in $\Omega$.
\begin{corollary}
Let $e_\lambda$ be the  eigenfunction in (\ref{bistek1}), (\ref{bistek2}) or (\ref{bistek3}). Then the vanishing order of solution $e_\lambda$ in $\Omega$ is everywhere less than
$C\lambda$, where $C$ depends only on the real analytic domain $\Omega$.
\label{cor1}
\end{corollary}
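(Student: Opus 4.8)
\emph{Proof proposal.} The plan is to read the vanishing‑order bound off the doubling inequality $(\ref{LLL})$ of Proposition \ref{pro2}, exactly as vanishing orders are extracted from doubling inequalities in the second order theory. First recall that by $(\ref{last})$ one has $\triangle^2 e_\lambda=0$ in $\widetilde\Omega\supset\Omega$, so $e_\lambda$ is real analytic on a neighborhood of $\overline\Omega$; being a nontrivial eigenfunction, it does not vanish identically on any ball (unique continuation for $\triangle^2$). Hence the vanishing order $\mathcal N(x_0)$ of $e_\lambda$ at a point $x_0\in\Omega$ — the degree of the lowest‑order nonzero homogeneous term in the Taylor expansion of $e_\lambda$ about $x_0$ — is a well‑defined finite nonnegative integer, and since $\|e_\lambda\|_{L^\infty(\mathbb B_\rho(x_0))}=c_{x_0}\rho^{\mathcal N(x_0)}(1+o(1))$ as $\rho\to0^+$ with $c_{x_0}>0$, it is characterized by
\begin{align*}
\mathcal N(x_0)=\lim_{\rho\to 0^+}\frac{\ln\|e_\lambda\|_{L^\infty(\mathbb B_\rho(x_0))}}{\ln\rho}.
\end{align*}

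Next I would iterate $(\ref{LLL})$ downward from the fixed scale $\bar r=\tfrac d4$. Since $(\ref{LLL})$ holds for every $x_0\in\overline\Omega$ and every $0<r\le\tfrac d4$ with one and the same constant $C=C(\Omega)$, and since all the balls involved lie inside $\widetilde\Omega$, applying it at the radii $\bar r/2,\bar r/4,\dots$ gives, for every $k\in\mathbb N$,
\begin{align*}
\|e_\lambda\|_{L^\infty(\mathbb B_{\bar r}(x_0))}\leq e^{Ck\lambda}\,\|e_\lambda\|_{L^\infty(\mathbb B_{2^{-k}\bar r}(x_0))}.
\end{align*}
Given $0<\rho\le\bar r$, choose $k$ with $2^{-k-1}\bar r<\rho\le2^{-k}\bar r$, so that $k+1\le C(\Omega)\big(1+\ln\tfrac{\bar r}{\rho}\big)$; using $\mathbb B_{2^{-k-1}\bar r}(x_0)\subset\mathbb B_\rho(x_0)$ one obtains
\begin{align*}
\|e_\lambda\|_{L^\infty(\mathbb B_\rho(x_0))}\geq e^{-C(k+1)\lambda}\,\|e_\lambda\|_{L^\infty(\mathbb B_{\bar r}(x_0))}\geq e^{-C_1\lambda}\Big(\frac{\rho}{\bar r}\Big)^{C_1\lambda}\|e_\lambda\|_{L^\infty(\mathbb B_{\bar r}(x_0))},
\end{align*}
for a constant $C_1=C_1(\Omega)$ obtained by absorbing $\ln2$ and the other fixed factors; note $\|e_\lambda\|_{L^\infty(\mathbb B_{\bar r}(x_0))}>0$ and $\bar r$ is fixed.

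Finally, taking logarithms in the last display, dividing by $\ln\rho<0$ (which reverses the inequality), and letting $\rho\to0^+$ — so that $\tfrac1{\ln\rho}$, $\tfrac{\ln\bar r}{\ln\rho}$ and $\tfrac{\ln\|e_\lambda\|_{L^\infty(\mathbb B_{\bar r}(x_0))}}{\ln\rho}$ all tend to $0$ — one gets
\begin{align*}
\mathcal N(x_0)=\lim_{\rho\to0^+}\frac{\ln\|e_\lambda\|_{L^\infty(\mathbb B_\rho(x_0))}}{\ln\rho}\leq C_1\lambda.
\end{align*}
Since $x_0\in\Omega$ is arbitrary and $C_1$ depends only on $\Omega$, the vanishing order of $e_\lambda$ is everywhere less than $C_1\lambda$, which is the assertion. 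I do not expect any genuine obstacle here: all the analytic content already sits in Proposition \ref{pro2}, and the only points deserving a line of care are the uniformity in $x_0$ of the constant in $(\ref{LLL})$ (so the final bound is uniform over $\Omega$), the admissibility of $\bar r=\tfrac d4$ in $(\ref{LLL})$ together with $\mathbb B_{\bar r}(x_0)\subset\widetilde\Omega$ (where $e_\lambda$ is analytic and nontrivial), and the elementary passage between the $L^\infty$ growth of $e_\lambda$ on shrinking balls and the integer vanishing order.
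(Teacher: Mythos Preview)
Your argument is correct and follows the same basic mechanism as the paper: iterate the doubling inequality \eqref{LLL} from a fixed scale down to the radius $\rho$ and read off the exponent. The only difference is that the paper, after iterating at $x_0$ up to scale $\sim d$, additionally chains overlapping balls across $\Omega$ to reach the maximum point $\bar x$, thereby obtaining the uniform quantitative lower bound $\|e_\lambda\|_{L^\infty(\mathbb B_r(x_0))}\geq (Cr)^{C\lambda}$ (with $\|e_\lambda\|_{L^\infty(\Omega)}=1$) in which no $x_0$-dependent constant appears on the right. You bypass this chaining by keeping the $x_0$-dependent factor $\|e_\lambda\|_{L^\infty(\mathbb B_{\bar r}(x_0))}>0$ and observing that it drops out in the limit $\rho\to0$; this is perfectly adequate for the vanishing-order statement of the corollary and makes the proof shorter, while the paper's version has the advantage of yielding the stronger uniform $(Cr)^{C\lambda}$ estimate as a byproduct.
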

\begin{proof}
The proof of the Corollary follows from the arguments in Corollary 1 in \cite{Zh4}. For the completeness of the presentation, we present the proof.
 We may assume that $\|e_\lambda\|_{L^\infty(\Omega)}=1$. Hence there exists some point $\bar x$ such that $\|e_\lambda\|_{L^\infty(\Omega)}=|e_\lambda(\bar x)|=1$. For any point $x_0\in \Omega$ and any $r>0$,
we iterate the doubling inequality (\ref{LLL}) $\hat{n}$ times so that
\begin{align}  2^{\hat{n}}r\leq d \label{distance}\end{align}
and \begin{align}
 \|e_\lambda\|_{L^\infty(\mathbb B_r(x_0))} \geq e^{-C\hat{n}\lambda}  \|e_\lambda\|_{L^\infty(\mathbb B_{2^{\hat{n}}r}(x_0))}.
 \label{mmb1}
 \end{align}
 Note that $\mathbb B_{2^{\hat{n}}r}(x_0)\subset\widetilde{ \Omega}$ and $d$ depends only on $\Omega$.
 Next we choose $x_1\in \partial \mathbb B_{(2^{\hat{n}}-1)r}(x_0)$ at $x_1$. It holds that
 \begin{align}
  \|e_\lambda\|_{L^\infty(\mathbb B_{(2^{\hat{n}}-1)r}(x_0))}\geq
 \|e_\lambda\|_{L^\infty(\mathbb B_r(x_1))}.
 \label{ccrol}
 \end{align}
 We also iterate the  doubling inequality (\ref{LLL}) $\hat{n}$ times. Thus,
 \begin{align}
 \|e_\lambda\|_{L^\infty(\mathbb B_{r}(x_1))}\geq e^{-C\hat{n}\lambda}
 \|e_\lambda\|_{L^\infty(\mathbb B_{2^{\hat{n}}r}(x_1))}.
 \label{mmb2}
 \end{align}
 After a finite number of steps, e.g. $m$ steps, we can arrive at $\bar x$. That is, $\bar x\in \mathbb B_{2^{\hat{n}}r}(x_{m-1})$. We also have
 \begin{align} m 2^{\hat{n}}r\leq \diam (\Omega).\end{align} Because of (\ref{distance}), we may choose $m=\frac{2\diam (\Omega)}{d}$. From the $m$ steps of iterations as (\ref{mmb1}), (\ref{ccrol}), and (\ref{mmb2}), we obtain that
 \begin{align}
\|e_\lambda\|_{L^\infty(\mathbb B_r(x_0))}&\geq e^{-C\lambda m \hat{n}}\|e_\lambda\|_{L^\infty(B_{2^{\hat{n}}r}(x_{m-1}))} \nonumber \\
 &\geq e^{-C\lambda \frac{2\diam (\Omega)}{d} \log_2 \frac{d}{2r}}\nonumber \\
&\geq (C r)^{C\lambda},
\label{vani}
\end{align}
where $C$ depends on $\Omega$.
Hence the estimate (\ref{vani}) implies that the vanishing order of solution at $x_0$ is less than $C\lambda$. Since $x_0$ is an arbitrary point, we get such vanishing rate of $e_\lambda$ at every point in $\Omega$.
\end{proof}

Thanks to the doubling inequality (\ref{LLL}), we are able to show the upper bounds of the nodal sets for eigenfunctions in (\ref{bistek1}), (\ref{bistek2}) or (\ref{bistek3}).
We need a lemma concerning the growth of a complex analytic function
with the number of zeros, see e.g. Lemma 2.3.2 in \cite{HL}.
\begin{lemma}
Suppose $f: \mathcal{B}_1(0)\subset \mathbb{C}\to \mathbb{C}$ is an
analytic function satisfying
$$ f(0)=1\quad \mbox{and} \quad \sup_{ \mathcal{B}_1(0)}|f|\leq 2^N$$
for some positive constant $N$. Then for any $r\in (0, 1)$, there
holds
$$\sharp\{z\in\mathcal{B}_r(0): f(z)=0\}\leq cN        $$
where $c$ depends on $r$. Especially, for $r=\frac{1}{2}$, there
holds
$$\sharp\{z\in \mathcal{B}_{1/2}(0): f(z)=0\}\leq N.        $$
\label{wwhy}
\end{lemma}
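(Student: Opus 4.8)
\noindent The plan is to derive the bound from Jensen's formula. First I would fix a radius $R$ with $r<R<1$; since $f$ is analytic on the open unit disc it is analytic on a neighbourhood of $\overline{\mathcal{B}_R(0)}$, and the hypothesis gives $\sup_{\overline{\mathcal{B}_R(0)}}|f|\le 2^N$. Because $f(0)=1\ne 0$, Jensen's formula applies: if $a_1,\dots,a_k$ are the zeros of $f$ in $\mathcal{B}_R(0)$, listed with multiplicity, then
\begin{align*}
\sum_{j=1}^{k}\log\frac{R}{|a_j|}=\frac{1}{2\pi}\int_0^{2\pi}\log|f(Re^{i\theta})|\,d\theta-\log|f(0)|\le \log 2^N=N\log 2 ,
\end{align*}
where I have used $\log|f(0)|=0$ and $\tfrac1{2\pi}\int_0^{2\pi}\log|f(Re^{i\theta})|\,d\theta\le \log\sup_{\partial\mathcal{B}_R}|f|$. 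If $f$ has zeros on the circle $|z|=R$ one either invokes the version of Jensen's formula that allows boundary zeros, or perturbs $R$ slightly within $(r,1)$; this changes nothing.

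Next I would keep only the zeros lying in the smaller ball $\mathcal{B}_r(0)$. Every term on the left above is nonnegative, and for a zero $a_j$ with $|a_j|<r$ one has $\log\frac{R}{|a_j|}\ge\log\frac{R}{r}>0$. Hence, writing $n(r)$ for the number of zeros of $f$ in $\mathcal{B}_r(0)$ counted with multiplicity (which only makes the assertion stronger),
\begin{align*}
n(r)\,\log\frac{R}{r}\le\sum_{j=1}^k\log\frac{R}{|a_j|}\le N\log 2 ,
\end{align*}
so that $n(r)\le \dfrac{\log 2}{\log(R/r)}\,N$.

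It remains to choose $R$. For a general $r\in(0,1)$, taking $R=\tfrac{1+r}{2}$ gives $n(r)\le cN$ with $c=c(r)=\log 2/\log\!\big(\tfrac{1+r}{2r}\big)$, which is the first assertion. For the sharp constant at $r=\tfrac12$, note that the estimate $n(\tfrac12)\le \big(\log 2/\log(2R)\big)\,N$ holds for every $R<1$; letting $R\uparrow 1$ and using that $n(\tfrac12)$ is an integer yields $n(\tfrac12)\le N$.

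The argument is essentially routine, so I do not expect a substantive obstacle; the only points deserving a word of care are that Jensen's formula must be applied on a disc of radius strictly less than $1$, where $f$ is genuinely analytic (rather than on the boundary of its domain of definition), and the harmless treatment of zeros that happen to lie on the integration circle $|z|=R$.
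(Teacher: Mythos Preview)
Your proof is correct. The paper does not actually prove this lemma; it merely cites it from Han--Lin (Lemma~2.3.2 in \cite{HL}) as a known result. Your argument via Jensen's formula is the standard one, and the details you give are sound: applying Jensen on a disc of radius $R<1$ where analyticity is assured, discarding the nonnegative terms coming from zeros outside $\mathcal{B}_r(0)$, and for $r=\tfrac12$ letting $R\uparrow 1$ to recover the sharp constant. One minor remark: the appeal to $n(\tfrac12)$ being an integer is unnecessary, since the inequality $n(\tfrac12)\le \big(\log 2/\log(2R)\big)N$ for every $R<1$ already gives $n(\tfrac12)\le N$ upon taking the infimum over $R$.
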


The idea to derive upper bounds of the measure of nodal sets in the real analytic setting using doubling inequalities and the complex growth lemma is kind of standard, see e.g. the pioneering work \cite{DF}, \cite{Lin} and \cite{HL}.
\begin{proof}[Proof of Theorem 1]
For any point $p\in \overline{\Omega}$,
applying elliptic estimates for eigenfunctions in (\ref{last})  in a
small ball $\mathbb B_{{r}}(p)\subset \widetilde{\Omega}$ yields that
\begin{equation}
|\frac{ D^{\alpha}e_\lambda (p)}{{\alpha} !}|\leq
C^{| \alpha|}_1 r^{-| \alpha|} \|e_\lambda \|_{L^\infty},
\end{equation}
where $C_1>1$ depends on $\Omega$.
We may consider the point $p$ as the origin. Summing up a geometric
series implies that we can extend $e_\lambda(x)$ to be a holomorphic function
$e_\lambda(z)$ with $z\in\mathbb{C}^n$. Furthermore, it holds that
\begin{equation}
\sup_{|z|\leq \frac{r}{2C_1}}|e_\lambda(z)|\leq C_2 \sup_{|x|\leq
r}|e_\lambda(x)|
\end{equation}
with $C_2>1$.

With aid of the doubling inequality (\ref{LLL}) and rescaling arguments, we can achieve that
\begin{equation}
\sup_{|z|\leq 2r}|e_\lambda(z)|\leq e^{C\lambda} \sup_{|x|\leq
r}|e_\lambda(x)| \label{dara13}
\end{equation}
for $0<r<r_0$ with $r_0$ depending on $\Omega$ and $C$ independent of $\lambda$ and $r$.

 We make use of Lemma \ref{wwhy} and the inequality (\ref{dara13}) to obtain the upper bounds of nodal sets for $e_\lambda(x)$. By rescaling and translation, we argue on scales
of order one. Let $p\in \mathbb B_{1/4}$ be the point where the
maximum of $|e_\lambda|$ in $\mathbb B_{1/4}$ is achieved. For each
direction $\omega \in S^{n-1}$, let $e_{\omega}(z)=
e_\lambda(p+z\omega)$ in $z\in \mathcal{B}_1(0)\subset\mathbb{C}$. Denote $N(\omega)=\sharp\{z\in\mathcal{B}_{1/2}(0)\subset
\mathbb{C}| e_\omega(z)=0\}$.
With aid of the doubling property (\ref{dara13}) and the Lemma \ref{wwhy}, we have that
\begin{eqnarray}
\sharp\{ x \in \mathbb B_{1/2}(p) &|& x-p \ \mbox{is parallel to} \
\omega \ \mbox{and} \ e_\lambda(x)=0\} \nonumber\\&\leq&
\sharp\{z\in\mathcal{B}_{1/2}(0)\subset
\mathbb{C}| e_\omega(z)=0\} \nonumber\\
&=&N(\omega)\nonumber\\ &\leq& C \lambda.
\end{eqnarray}
Thanks to the integral geometry estimates, we obtain that
\begin{eqnarray}
H^{n-1}\{ x \in \mathbb B_{1/2}(p)| e_\lambda(x)=0\} &\leq&
c(n)\int_{S^{n-1}} N(\omega)\,d \omega \nonumber\\
&\leq &\int_{S^{n-1}}C\lambda\, d\omega\nonumber\\ &=&C\lambda.
\end{eqnarray}
That is,
\begin{eqnarray}
H^{n-1}\{ x \in \mathbb B_{1/4}(0)| e_\lambda(x)=0\} \leq C\lambda.
\end{eqnarray}
 Since $\overline{\Omega}$ is compact, covering the domain $\overline{\Omega}$ using finitely many of balls gives that
\begin{equation} H^{n-1}\{x\in
\Omega|e_\lambda(x)=0\}\leq
C\lambda.
\end{equation}
Thus, we arrive at the
conclusion in Theorem \ref{th1}.

\end{proof}

\section{Nodal sets of eigenfunctions for buckling problems}
In this section, we aim to obtain the upper bounds of nodal sets of eigenfunctions for the buckling eigenvalue problem (\ref{bilap}). First of all, we need to analytically extend $e_\lambda$ across the boundary $\partial\Omega$.
The same arguments as the proof of Proposition 1 follow. We perform a lifting argument as $\hat{u}(x, t)=e^{\sqrt{\lambda} t} e_\lambda(x)$. Then $\hat{u}(x, t)$ satisfies the equation
\begin{equation}
\left \{ \begin{array}{lll}
\triangle^2 \hat{u}+ \partial_t^2\triangle\hat{u}=0  \quad &\mbox{in} \ {\Omega}\times (-\infty, \ \infty),  \medskip\\
\hat{u}=\frac{\partial \hat{u}}{\partial \nu}=0    \quad &\mbox{on} \ {\partial\Omega}\times (-\infty, \ \infty).
\end{array}
\right.
\end{equation}
Furthermore, let $$u(x, t, s)= e^{i \sqrt{\lambda} s}\hat{u}(x, t).$$ We have
\begin{equation}
\left \{ \begin{array}{lll}
\triangle^2 {u}+ \partial_t^2\triangle{u}+\partial^4_t u+\partial^4_s u=0  \quad &\mbox{in} \ {\Omega}\times (-\infty, \ \infty)\times (-\infty, \ \infty),  \medskip\\
{u}=\frac{\partial {u}}{\partial \nu}=0    \quad &\mbox{on} \ {\partial\Omega}\times (-\infty, \ \infty)\times (-\infty, \ \infty).
\label{die}
\end{array}
\right.
\end{equation}

Using the elliptic estimates in \cite{Mo} and \cite{MN}, we can extend the eigenfunction $u(x,t)$ across the boundary $\partial\Omega\times [-1, \ 1]\times [-1, \ 1]$ satisfying
\begin{align}
\triangle^2 {u}+ \partial_t^2\triangle{u}+\partial^4_t u+\partial^4_s u=0  \quad \mbox{in} \ {\widetilde{\Omega}}\times [-1, \ 1]\times [-1, \ 1]
\label{ccaa}
\end{align}
with
\begin{align}
\|u\|_{L^\infty(\widetilde{\Omega}\times [-1, \ 1]\times [-1, \ 1]}\leq C \|u\|_{L^\infty(\Omega\times [-2, \ 2]\times [-2, \ 2])},
\end{align}
where $\widetilde{\Omega}=\{ x\in \mathbb R^n|\dist(x, \Omega)\leq d\}$ and  $C$ depends only on $\Omega$. The uniqueness of the analytic continuation yields that
\begin{align}
\triangle^2 e_\lambda+\lambda \triangle e_\lambda=0  \quad &\mbox{in} \ {\widetilde{\Omega}}.
\label{biexten}
\end{align}
 Thus,
the growth of $e_\lambda$ can be controlled as
\begin{align}
\|e_\lambda\|_{L^\infty(\widetilde{\Omega})}\leq e^{C\sqrt{\lambda}}\|e_\lambda\|_{L^\infty(\Omega)}.
\label{want1}
\end{align}

Next we need to show three-ball inequalities, and then doubling inequalities for $e_\lambda$ in (\ref{biexten}) for any point $x_0\in \Omega$. To this end, we will establish the quantitative Carleman estimates for the operators in (\ref{biexten}).  The following quantitative Carleman estimates hold for Laplace eigenvalue problems, see e.g. \cite{DF}, \cite{BC} and \cite{Zh}. Let $\phi=-\ln r(x)+r^\epsilon(x)$ for some small constant $0<\epsilon<1$.  There exist positive constants $R_0$ and $C$, such that, for any $f\in C^\infty_0(\mathbb B_{R_0}(x_0)\backslash \mathbb B_{\delta}(x_0))$ and  $\tau>C(1+\sqrt{\|V(x)||_{C^1}})$, one has
 \begin{align}
 C\| r^2 e^{\tau \phi} (\triangle+V(x)) f\|&\geq \tau^{\frac{3}{2}}\|r^{\frac{\epsilon}{2}}e^{\tau\phi} f\|+\tau \delta \|r^{-1} e^{\tau\phi} f\|  \nonumber \\ &+
 \tau^{\frac{1}{2}}\|r^{1+\frac{\epsilon}{2}}e^{\tau\phi} \nabla f\|.
 \label{similar}
\end{align}
See corollary 2.2 in \cite{BC} for (\ref{similar}).
We iterate (\ref{similar}) to derive the quantitative Carleman estimates for the operator in (\ref{biexten}) as follows.
\begin{lemma}
There exist positive constants $R_0$ and $C$,  such that, for any $x_0\in \Omega$, any $f\in C^\infty_0(\mathbb B_{R_0}(x_0)\backslash \mathbb B_{\delta}(x_0))$ with $0<\delta<R_0<\frac{d}{10}<1$,  and  $\tau>C(1+\sqrt{\lambda})$, one has
\begin{align}
C\|r^4 e^{\tau \phi} (\triangle^2 f+\lambda \triangle f) \|\geq \tau^3 \|r^\epsilon e^{\tau \phi} f\|+\tau^2 \delta^2 \|r^{-2} e^{\tau \phi} f\|.
\label{Carle1}
\end{align}
\label{prob}
\end{lemma}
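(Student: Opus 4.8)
The goal is to derive the fourth-order Carleman estimate \eqref{Carle1} for the operator $\triangle^2 + \lambda\triangle$ from the known second-order estimate \eqref{similar}, by a factorization/iteration argument. The plan is to exploit the algebraic identity $\triangle^2 + \lambda\triangle = \triangle(\triangle + \lambda)$, so that applying \eqref{similar} twice — once for the operator $\triangle$ (i.e.\ $V\equiv 0$) and once for the operator $\triangle + \lambda$ (i.e.\ $V(x)\equiv \lambda$, a constant, so $\|V\|_{C^1}=\lambda$ and the threshold $\tau > C(1+\sqrt{\lambda})$ is exactly what appears in the statement) — yields control of $f$ in terms of $\triangle^2 f + \lambda\triangle f$. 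Concretely, I would set $g = (\triangle+\lambda)f$, which lies in $C^\infty_0(\mathbb B_{R_0}(x_0)\setminus \mathbb B_\delta(x_0))$ since $f$ does, and first apply \eqref{similar} with $V=0$ to $g$:
\begin{align*}
C\|r^2 e^{\tau\phi}\triangle g\| \geq \tau^{3/2}\|r^{\epsilon/2}e^{\tau\phi} g\| + \tau\delta\|r^{-1}e^{\tau\phi} g\| + \tau^{1/2}\|r^{1+\epsilon/2}e^{\tau\phi}\nabla g\|.
\end{align*}
Since $\triangle g = \triangle^2 f + \lambda\triangle f$, the left-hand side is exactly $C\|r^2 e^{\tau\phi}(\triangle^2 f + \lambda\triangle f)\|$, which (after pulling out the bounded factor $r^2$, acceptable on $\mathbb B_{R_0}$) is controlled by the right-hand side of \eqref{Carle1} up to constants. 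So the real work is to bound $\tau^3\|r^\epsilon e^{\tau\phi}f\| + \tau^2\delta^2\|r^{-2}e^{\tau\phi}f\|$ from above by the quantities appearing on the right of the displayed inequality for $g$.

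Next I would apply \eqref{similar} a second time, now with $V(x)\equiv\lambda$, to the function $f$ itself:
\begin{align*}
C\|r^2 e^{\tau\phi}(\triangle+\lambda)f\| \geq \tau^{3/2}\|r^{\epsilon/2}e^{\tau\phi}f\| + \tau\delta\|r^{-1}e^{\tau\phi}f\| + \tau^{1/2}\|r^{1+\epsilon/2}e^{\tau\phi}\nabla f\|,
\end{align*}
which is valid precisely because $\tau > C(1+\sqrt{\lambda}) = C(1+\sqrt{\|V\|_{C^1}})$. The left-hand side here is $C\|r^2 e^{\tau\phi} g\|$. Now chain the two estimates: from the first display, $\|r^{\epsilon/2}e^{\tau\phi}g\|$, $\|r^{-1}e^{\tau\phi}g\|$ (hence also $\|r^2 e^{\tau\phi} g\| \le C\|r^{-1}e^{\tau\phi}g\|$ on $\mathbb B_{R_0}$ since $r\le R_0<1$) are all dominated by $C\|r^2 e^{\tau\phi}(\triangle^2 f+\lambda\triangle f)\|$ divided by the appropriate powers of $\tau$ and $\delta$. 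Feeding $\|r^2 e^{\tau\phi} g\| \le \tau^{-1}\delta^{-1} \cdot C\|r^2 e^{\tau\phi}(\triangle^2 f + \lambda\triangle f)\|$ into the second display gives
\begin{align*}
\tau^{3/2}\|r^{\epsilon/2}e^{\tau\phi}f\| + \tau\delta\|r^{-1}e^{\tau\phi}f\| \leq \frac{C}{\tau\delta}\|r^2 e^{\tau\phi}(\triangle^2 f + \lambda\triangle f)\|.
\end{align*}
I would then multiply through by $\tau^{3/2}$ on the first term and by $\tau\delta$ on the second (using $r^{\epsilon/2}\cdot r^{\epsilon/2} = r^\epsilon$ and $r^{-1}\cdot r^{-1} = r^{-2}$, together with $r\le R_0 < 1$ to absorb leftover positive powers of $r$), obtaining $\tau^3\|r^\epsilon e^{\tau\phi}f\| + \tau^2\delta^2\|r^{-2}e^{\tau\phi}f\| \le C\|r^4 e^{\tau\phi}(\triangle^2 f + \lambda\triangle f)\|$, after also replacing $r^2$ by $r^4$ on the right (harmless since $r<1$, and it is the form stated in \eqref{Carle1}). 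Care is needed to track that at each chaining step the loss is only in powers of $\tau$ and $\delta$ that the final inequality can afford, and that the gradient terms $\|r^{1+\epsilon/2}e^{\tau\phi}\nabla f\|$ and $\|r^{1+\epsilon/2}e^{\tau\phi}\nabla g\|$, while positive and hence droppable on the right-hand side of each application, are exactly what is needed should one want a sharper estimate — here they can simply be discarded.

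The main obstacle I anticipate is bookkeeping of the weights and the $\tau$-$\delta$ powers through the iteration: one must make sure the second-order estimate is applied to $g=(\triangle+\lambda)f$ with the correct support (which holds, since differentiation does not enlarge support) and that the constant $C$ in the threshold $\tau>C(1+\sqrt\lambda)$ is chosen large enough to serve \emph{both} invocations of \eqref{similar} simultaneously — once with $V=0$ (threshold $\tau>C$) and once with $V=\lambda$ (threshold $\tau>C(1+\sqrt\lambda)$); taking the larger threshold suffices. A secondary technical point is that \eqref{similar} is stated for $\mathbb B_{R_0}(x_0)\setminus\mathbb B_\delta(x_0)$ while the composite needs the \emph{same} $R_0$ and $\delta$ for both applications, so one should fix $R_0 < \frac{d}{10} < 1$ at the outset and note that the geometry is identical in both invocations. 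No genuinely new analytic input is required beyond \eqref{similar}; the proof is purely a factorization-and-iteration of a known estimate, exactly as the paragraph preceding the lemma advertises ("We iterate \eqref{similar} to derive the quantitative Carleman estimates for the operator in \eqref{biexten}").
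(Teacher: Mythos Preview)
Your overall strategy---factor $\triangle^2+\lambda\triangle=\triangle(\triangle+\lambda)$ and iterate the second-order Carleman estimate \eqref{similar} twice---is exactly the paper's approach (the paper writes the factorization as $(\triangle+\lambda)\triangle$, which is the same thing). However, there is a genuine gap in your weight bookkeeping. Your claim that ``replacing $r^2$ by $r^4$ on the right [is] harmless since $r<1$'' is in the wrong direction: because $r^4\le r^2$ on $\mathbb B_{R_0}$, one has $\|r^4 e^{\tau\phi}(\triangle^2 f+\lambda\triangle f)\|\le\|r^2 e^{\tau\phi}(\triangle^2 f+\lambda\triangle f)\|$, so an upper bound by $C\|r^2\cdots\|$ does \emph{not} give an upper bound by $C\|r^4\cdots\|$. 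Likewise your chaining only produces $\tau^2\delta^2\|r^{-1}e^{\tau\phi}f\|$ on the output side, not the stronger $\tau^2\delta^2\|r^{-2}e^{\tau\phi}f\|$ required by \eqref{Carle1}. In short, naive chaining via $\|r^2 e^{\tau\phi}g\|\le\|r^{\epsilon/2}e^{\tau\phi}g\|$ (or $\le\|r^{-1}e^{\tau\phi}g\|$) loses exactly the powers of $r$ that the lemma asserts.

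The missing device is the $\tau$-shift exploited in the paper: since $\phi=-\ln r+r^\epsilon$ one has $r^a e^{\tau\phi}=r^b e^{(\tau+a-b)\phi}e^{(b-a)r^\epsilon}$, and the factor $e^{(b-a)r^\epsilon}$ is bounded above and below on $0<r<1$ by constants independent of $\tau$. Thus $\|r^4 e^{\tau\phi}(\triangle+\lambda)\triangle f\|\ge C\|r^2 e^{(\tau-2)\phi}(\triangle+\lambda)\triangle f\|$, to which \eqref{similar} (with parameter $\tau-2$) applies, yielding $\tau^{3/2}\|r^{\epsilon/2}e^{(\tau-2)\phi}\triangle f\|$; another shift converts this weight to $r^2 e^{(\tau-\epsilon/2)\phi}$, a second application of \eqref{similar} (now with $V=0$ and parameter $\tau-\epsilon/2$) gives $\tau^{3/2}\|r^{\epsilon/2}e^{(\tau-\epsilon/2)\phi}f\|$, and a final shift returns this to $\|r^\epsilon e^{\tau\phi}f\|$. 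The $\tau^2\delta^2\|r^{-2}e^{\tau\phi}f\|$ term is obtained by the same shifting applied to the $\tau\delta\|r^{-1}\cdots\|$ terms in \eqref{similar}. Once you insert this $\tau$-shift at each step, your argument goes through verbatim and coincides with the paper's.
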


\begin{proof}
The definition of the weight function $\phi=-\ln r+r^\epsilon$ gives that
$$  r^4 e^{\tau\phi}=r^2 e^{(\tau-2)\phi}e^{2 r^\epsilon}. $$
Since $0<r<R_0<1$, then $1<e^{2 r^\epsilon}<e^2$. It follows from (\ref{similar}) that, for $\tau>C(1+\sqrt{\lambda})$,
\begin{align}
C\| r^4 e^{\tau \phi} (\triangle+\lambda)\triangle f \|&\geq  C\| r^2 e^{(\tau-2) \phi}  (\triangle+\lambda)\triangle f  \| \nonumber\\
&\geq \tau^{\frac{3}{2}}\|r^{\frac{\epsilon}{2}} e^{(\tau-2)\phi} \triangle f\|.
\label{hih1}
\end{align}
Elementary calculations show that
\begin{align}r^{\frac{\epsilon}{2}}e^{(\tau-2)\phi}&=r^2 e^{\tau\phi}r^{\frac{\epsilon}{2}} e^{-2 r^\epsilon}\nonumber \\
&= r^2 e^{ (\tau-\frac{\epsilon}{2})\phi  } e^{\frac{\epsilon}{2} r^\epsilon} e^{-2 r^\epsilon}. \end{align}
It follows that
$$|r^{\frac{\epsilon}{2}}e^{(\tau-2)\phi}|\geq C r^2  e^{(\tau-\frac{\epsilon}{2})\phi}.  $$
Thanks to (\ref{similar}) again, we obtain that
\begin{align}
\|r^{\frac{\epsilon}{2}}e^{(\tau-2)\phi} \triangle f\|&\geq C\|r^2  e^{(\tau-\frac{\epsilon}{2})\phi}\triangle f\|\nonumber\\
 &\geq C\tau^{\frac{3}{2}} \|r^{\frac{\epsilon}{2}} e^{(\tau-\frac{\epsilon}{2})\phi} f\| \nonumber \\
&\geq C\tau^{\frac{3}{2}} \|r^\epsilon e^{\tau\phi} f\|,
\label{hih2}
\end{align}
where the following estimate is used
$$ e^{-\frac{\epsilon}{2}\phi}=r^{\frac{\epsilon}{2}} e^{-r^\epsilon}\geq r^{\frac{\epsilon}{2}} e^{-1}.   $$
Together with the inequalities (\ref{hih1}) and (\ref{hih2}), we arrive at
\begin{equation}
\| r^4 e^{\tau \phi} (\triangle+\lambda) \triangle f \|\geq C\tau^3 \|r^\epsilon e^{\tau\phi} f\|.
\label{nana}
\end{equation}
Applying the similar argument as the way in showing (\ref{nana}), we can derive that
\begin{equation}
\| r^4 e^{\tau \phi} (\triangle+\lambda) \triangle f \|\geq C\tau^2\delta^2 \|r^{-2} e^{\tau\phi} f\|.
\label{baba}
\end{equation}
In view of (\ref{nana}) and (\ref{baba}), we obtain the desired estimates
\begin{align}
 C\| r^4 e^{\tau \phi} (\triangle^2 f+\lambda \triangle f)\|\geq \tau^3\|r^{\epsilon}e^{\tau\phi} f\|+\tau^2 \delta^2 \|r^{-2} e^{\tau\phi} f\|.
 \label{sasa}
 \end{align}
\end{proof}

As the arguments  in Section 2, the three-ball inequalities are important tools in characterizing the growth of eigenfunctions.
We employ Carleman estimates (\ref{Carle1}) to show  the three-ball inequalities for the solution $e_\lambda$ in (\ref{biexten}).
\begin{lemma}
There exist positive constants $R_0$, $C$ and $0<\beta<1$  such that, for any $R< \frac{R_0}{8}$ and any $x_0\in \Omega$, the solutions $e_\lambda$ of (\ref{biexten}) satisfy
\begin{equation}
\|e_\lambda\|_{\mathbb B_{2R}(x_0)}\leq e^{C \sqrt{\lambda}} \|e_\lambda\|^{\beta}_{\mathbb B_R(x_0)} \|e_\lambda\|^{1-\beta}_{\mathbb B_{3R}(x_0)}.
\label{balls}
\end{equation}
\label{baa}
\end{lemma}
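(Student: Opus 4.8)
The plan is to apply the quantitative Carleman estimate \eqref{Carle1} to a suitably cut-off version of $e_\lambda$, and then to separate the resulting terms on annuli as in the standard Donnelly--Fefferman scheme. First I would fix $R<\frac{R_0}{8}$, center everything at $x_0$, and write $r(x)=|x-x_0|$. I would introduce a smooth cut-off $\psi$ that vanishes for $r<\frac{R}{2}$ and for $r>\frac{5R}{2}$, equals $1$ on the annulus $\frac{3R}{4}<r<2R$, with the usual derivative bounds $|\nabla^\alpha\psi|\le C R^{-|\alpha|}$ in the two transition regions. Applying \eqref{Carle1} to $f=\psi e_\lambda$ with the choice $\delta$ comparable to $R$ (so that the factor $\tau^2\delta^2$ is harmless and only the $\tau^3$ term on the right is kept), and using the equation $\triangle^2 e_\lambda+\lambda\triangle e_\lambda=0$ from \eqref{biexten}, the left-hand side becomes $\|r^4 e^{\tau\phi}(\triangle^2+\lambda\triangle)(\psi e_\lambda)\|=\|r^4 e^{\tau\phi}[\triangle^2+\lambda\triangle,\psi]e_\lambda\|$, a commutator supported only where $\nabla\psi\ne0$, i.e. on the inner annulus $A_{R/2,\,3R/4}$ and the outer annulus $A_{2R,\,5R/2}$. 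This commutator is a third-order operator on $e_\lambda$ with coefficients involving derivatives of $\psi$, plus the first-order piece $\lambda[\triangle,\psi]$; the $\sqrt{\lambda}$ growth in the final estimate comes precisely from the requirement $\tau>C(1+\sqrt\lambda)$ together with this $\lambda$-weighted lower-order term, handled via $\lambda\le C\tau^2$.

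Next I would exploit that $\phi$ is radial and strictly decreasing in $r$ to pull the exponential weight out of each term as a constant evaluated at the appropriate radius: on the left one keeps $e^{\tau\phi(2R)}\|e_\lambda\|_{\mathbb B_{2R}\setminus\mathbb B_{3R/4}}$ (hence $\ge e^{\tau\phi(2R)}\|e_\lambda\|_{A_{3R/4,2R}}$), while on the right the inner-annulus commutator terms are bounded by $e^{\tau\phi(R/2)}$ times local $L^2$ norms of $e_\lambda$ and its derivatives on $A_{R/2,3R/4}\subset\mathbb B_{3R/4}$, and the outer-annulus terms by $e^{\tau\phi(2R)}$ times such norms on $A_{2R,5R/2}\subset\mathbb B_{3R}$. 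Then I would absorb the derivative norms using a Caccioppoli-type inequality for the operator $\triangle^2+\lambda\triangle$ (the same device invoked in the proof of Proposition \ref{pro2}, cf. \eqref{hihcac} / Lemma 1 in \cite{Zh5}), upgrading $\sum_{|\alpha|\le 3}\|r^{|\alpha|}\nabla^\alpha e_\lambda\|$ on an annulus to $C\|e_\lambda\|$ on a slightly larger ball, at the cost of an extra $e^{C\sqrt\lambda}$ factor from the $\lambda\triangle$ term. After dividing by $e^{\tau\phi(2R)}$ this yields, schematically,
\begin{align*}
\|e_\lambda\|_{\mathbb B_{R}(x_0)}\ \le\ e^{C\sqrt\lambda}\Big(e^{\tau(\phi(R/2)-\phi(2R))}\|e_\lambda\|_{\mathbb B_{3R/4}(x_0)}+\|e_\lambda\|_{\mathbb B_{3R}(x_0)}\Big),
\end{align*}
for all $\tau>C(1+\sqrt\lambda)$, where $\phi(R/2)-\phi(2R)>0$ is comparable to a constant independent of $R$.

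Finally I would optimize in $\tau$ in the familiar way: treat $A:=\|e_\lambda\|_{\mathbb B_{3R/4}(x_0)}$ and $B:=\|e_\lambda\|_{\mathbb B_{3R}(x_0)}$ and choose $\tau$ to balance the two terms, namely $\tau\approx\frac{1}{\phi(R/2)-\phi(2R)}\log(B/A)$ when this exceeds $C(1+\sqrt\lambda)$, and $\tau\approx C(1+\sqrt\lambda)$ otherwise. In the first regime one obtains $\|e_\lambda\|_{\mathbb B_R(x_0)}\le e^{C\sqrt\lambda}A^{\beta}B^{1-\beta}$ with $\beta=\frac{\phi(R/2)-\phi(2R)}{\phi(R/2)-\phi(3R/4)+\phi(R/2)-\phi(2R)}\in(0,1)$ (a clean interpolation exponent depending only on the geometry of $\phi$), and in the second regime $\log(B/A)\le C(1+\sqrt\lambda)$ gives $B\le e^{C\sqrt\lambda}A$, which trivially implies the claimed bound as well. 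Since $\mathbb B_{3R/4}(x_0)\subset\mathbb B_R(x_0)$ one can replace $A$ by $\|e_\lambda\|_{\mathbb B_R(x_0)}$, yielding exactly \eqref{balls}; passing to the $L^\infty$ version is routine via interior elliptic estimates for $\triangle^2+\lambda\triangle$. The main obstacle I anticipate is bookkeeping the $\lambda$-dependence: the lower-order term $\lambda[\triangle,\psi]e_\lambda$ in the commutator and the $\lambda\triangle e_\lambda$ term in the Caccioppoli estimate must both be controlled by powers of $\tau$ under the constraint $\tau>C(1+\sqrt\lambda)$, and it is exactly this interplay that forces the factor $e^{C\sqrt\lambda}$ (rather than $e^{C\lambda}$) in \eqref{balls} and ultimately the $\sqrt\lambda$ nodal bound of Theorem \ref{th2}.
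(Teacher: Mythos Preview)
Your overall strategy --- apply the Carleman estimate \eqref{Carle1} to a cut-off of $e_\lambda$, control the commutator via Caccioppoli, pull out the radial weight, and optimize in $\tau$ --- is exactly the paper's approach. However, your specific cut-off breaks the argument. You take $\psi\equiv 1$ on $\{3R/4<r<2R\}$ with the outer transition on $A_{2R,5R/2}$. Then the minimum of $e^{\tau\phi}$ on the region where $\psi\equiv1$ (which controls the left-hand side) is attained at $r=2R$, and the maximum of $e^{\tau\phi}$ on the outer transition annulus is \emph{also} attained at $r=2R$. After dividing by $e^{\tau\phi(2R)}$ the outer term carries \emph{no} factor $e^{-\tau\beta_2}$ with $\beta_2>0$; your displayed inequality correctly has a bare $\|e_\lambda\|_{\mathbb B_{3R}}$ on the right, but then there is nothing to balance and the optimization you describe cannot produce an interpolation inequality (the best you get is the trivial bound $\|e_\lambda\|_{\mathbb B_{2R}}\le e^{C\sqrt\lambda}\|e_\lambda\|_{\mathbb B_{3R}}$). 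Your formula for $\beta$ and the choice $\tau\approx\tfrac{1}{\phi(R/2)-\phi(2R)}\log(B/A)$ do not rescue this: with only one exponential weight present, equating the two terms does not yield $A^\beta B^{1-\beta}$.

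The fix is the one the paper uses: let $\psi\equiv1$ on a slightly \emph{larger} annulus, say $\{3R/4<r<9R/4\}$, so that the outer transition sits in $A_{9R/4,5R/2}$. One still bounds the left-hand side from below only on $A_{3R/4,2R}$ (throwing away $A_{2R,9R/4}$), which gives the weight $e^{\tau\phi(2R)}$, while the outer commutator now carries $e^{\tau\phi(9R/4)}$. The gap $\beta_2=\phi(2R)-\phi(9R/4)>0$ is then genuinely positive and bounded away from zero uniformly in $R$, and the standard optimization yields \eqref{balls} with $\beta=\beta_2/(\beta_1+\beta_2)$ where $\beta_1=\phi(R/4)-\phi(2R)$. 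A minor further point: the Caccioppoli inequality \eqref{hihcac} produces only a polynomial factor $(1+\lambda)^3$, not $e^{C\sqrt\lambda}$; together with the $\lambda$ from the commutator $\lambda[\triangle,\psi]$ this gives an overall $\lambda^4$, which is then absorbed into $e^{C\sqrt\lambda}$ at the optimization stage because the floor $\tau\ge C(1+\sqrt\lambda)$ contributes $e^{C\sqrt\lambda}$ when substituted back. Also, the left-hand side of your displayed inequality should read $\|e_\lambda\|_{\mathbb B_{2R}}$ (after adding $\|e_\lambda\|_{\mathbb B_{3R/4}}$ to both sides), not $\|e_\lambda\|_{\mathbb B_R}$.
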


\begin{proof}
We introduce a smooth cut-off function $0<\psi(r)<1$  satisfying the following properties:
\begin{itemize}
\item $\psi(r)=0$ \ \ \mbox{if} \ $r(x)<\frac{R}{4}$ \ \mbox{or} \  $r(x)>\frac{5R}{2}$, \medskip
\item $\psi(r)=1$ \ \ \mbox{if} \ $\frac{3R}{4}<r(x)<\frac{9R}{4}$, \medskip
\item $|\nabla^\alpha \psi|\leq \frac{C}{R^{|\alpha|}}$
\end{itemize}
for $R<
\frac{R_0}{8}$. Since the function $\psi u$ is supported in the annulus $A_{\frac{R}{4}, \frac{5R}{2}}$, applying the Carelman estimates (\ref{Carle1}) with $f=\psi e_\lambda$, we derive that
\begin{align}
\tau^2 \| e^{\tau \phi}e_\lambda\|_{\frac{3R}{4}, \frac{9R}{4}}& \leq C\| r^4 e^{\tau \phi}\big(\triangle^2 (\psi e_\lambda)+\lambda \triangle (\psi e_\lambda)\big)\| \nonumber  \\
&=C \| r^4 e^{\tau \phi}([\triangle^2, \ \psi] e_\lambda+\lambda \triangle \psi e_\lambda+2\lambda \nabla \psi\nabla e_\lambda)\|,
\end{align}
where we have used the equation (\ref{biexten}). Note that $[\triangle^2, \ \psi]$ is a third order differential operator on $e_\lambda$ involving the derivative of $\psi$.
By the properties of $\psi$, we get that
\begin{align*}
\| e^{\tau \phi} e_\lambda\|_{\frac{3R}{4}, \frac{9R}{4}}&\leq C\lambda (\| e^{\tau \phi} e_\lambda\|_{\frac{R}{4}, \frac{3R}{4}}+\| e^{\tau \phi} e_\lambda\|_{\frac{9R}{4}, \frac{5R}{2}} ) \\
&+ C(\sum_{|\alpha|=1}^{3} \| r^{|\alpha|} e^{\tau \phi} \nabla^\alpha e_\lambda \|_{\frac{R}{4}, \frac{3R}{4}}+\sum_{|\alpha|=1}^{3} \| r^{|\alpha|} e^{\tau \phi} \nabla^\alpha e_\lambda \|_{\frac{9R}{4}, \frac{5R}{2}}) \\
&+C\lambda(\| r^3 e^{\tau \phi} \nabla e_\lambda \|_{\frac{R}{4}, \frac{3R}{4}}+ \| r^3 e^{\tau \phi} \nabla e_\lambda \|_{\frac{9R}{4}, \frac{5R}{2}}).
\end{align*}
Since the weight function $\phi$ is radial and decreasing, we obtain that
\begin{align}
 \|e^{\tau \phi}  e_\lambda\|_{\frac{3R}{4}, \frac{9R}{4}}&\leq C\lambda (e^{\tau \phi(\frac{R}{4})} \| e_\lambda\|_{\frac{R}{4}, \frac{3R}{4}}+e^{\tau \phi(\frac{9R}{4})} \| e_\lambda\|_{\frac{9R}{4}, \frac{5R}{2}} ) \nonumber \\
&+ C(e^{\tau \phi(\frac{R}{4})}\sum_{|\alpha|=1}^{3}\| r^{|\alpha|} \nabla^\alpha e_\lambda\|_{\frac{R}{4}, \frac{3R}{4}}+e^{\tau \phi(\frac{9R}{4})}\sum_{|\alpha|=1}^{3}\| r^{|\alpha|} \nabla^\alpha e_\lambda\|_{\frac{9R}{4}, \frac{5R}{2}})\nonumber \\
&+C\lambda(e^{\tau \phi(\frac{R}{4})} \| r^3  \nabla e_\lambda \|_{\frac{R}{4}, \frac{3R}{4}}+ e^{\tau \phi(\frac{9R}{4})}\| r^3  \nabla e_\lambda \|_{\frac{9R}{4}, \frac{5R}{2}}).
\label{recall}
\end{align}
For the higher order elliptic equations
\begin{equation}  \triangle^2 u+\lambda \triangle u=0,
\label{zhu}
\end{equation}
the following Caccioppoli type inequality holds
\begin{equation}
\sum^{3}_{|\alpha|=0} \|r^{|\alpha|}\nabla^\alpha u\|_{c_3R, \ c_2R} \leq C (\lambda+1)^{3} \|u\|_{c_4R, \  c_1R}
\label{hihcac}
\end{equation}
 for all positive constants $0<c_4<c_3<c_2<c_1<1$. See e.g. Lemma 1 in \cite{Zh2} for such quantitative Caccioppoli type inequality.
It follows from (\ref{hihcac}) that
\begin{align*}
\| r^{|\alpha|} \nabla^\alpha e_\lambda\|_{\frac{R}{4}, \frac{3R}{4}}\leq C\lambda^3 \| e_\lambda\|_{R}
\end{align*}
and
\begin{align*}
\| r^{|\alpha|} \nabla^\alpha e_\lambda\|_{\frac{9R}{4}, \frac{5R}{2}}\leq  C \lambda^3 \| e_\lambda\|_{3R}
\end{align*}
for all $1\leq |\alpha|\leq 3$ and $\lambda\geq 1$. Thus, the estimate (\ref{recall}) yields that
\begin{align}
\|e_\lambda\|_{\frac{3R}{4}, 2R}\leq C\lambda^{4}\big( e^{\tau(\phi(\frac{R}{4})-\phi(2R))} \|e_\lambda\|_R+  e^{\tau(\phi(\frac{9R}{4})-\phi(2R))} \|e_\lambda\|_{3R}\big).
\label{inequal}
\end{align}
We choose parameters
$$ \beta^1_R=\phi(\frac{R}{4})-\phi(2R),$$
$$ \beta^2_R=\phi(2R)-\phi(\frac{9R}{4}).$$
From the definition of the weight function $\phi$, it holds that
$$ 0<\beta^{-1}_1<\beta^1_R<\beta_1 \quad \mbox{and} \quad 0<\beta_2<\beta^2_R<\beta^{-1}_2,$$
where $\beta_1$ and $\beta_2$ independent of $R$.
Adding $\|e_\lambda\|_{\frac{3R}{4}}$ to both sides of the inequality (\ref{inequal}) leads that
\begin{equation}
\|e_\lambda\|_{2R}\leq C\lambda^{4}\big( e^{\tau\beta_1}\|e_\lambda\|_R+  e^{-\tau\beta_2}\|e_\lambda\|_{3R}     \big).
\end{equation}
To incorporate the second term in the right hand side of the last inequality into the left hand side,
 we choose $\tau$ such that
\begin{align}C\lambda^{4}e^{-\tau\beta_2}\|e_\lambda\|_{3R}\leq \frac{1}{2}\|e_\lambda\|_{2R}. \label{incor}  \end{align}
The inequality (\ref{incor}) holds if
$$\tau\geq \frac{1}{\beta_2} \ln \frac{2C\lambda^{4}\|e_\lambda\|_{3R}}{\|e_\lambda\|_{2R} }.   $$
Thus, for such $\tau$, we obtain that
\begin{equation}
\|e_\lambda\|_{2R}\leq C\lambda^{4} e^{\tau\beta_1}\|e_\lambda\|_R.
\label{substitute}
\end{equation}
Since $\tau>C\sqrt{\lambda}$ is needed to apply the Carleman estimates (\ref{Carle1}), we select
$$ \tau=C\sqrt{\lambda}+\frac{1}{\beta_2} \ln \frac{2C\lambda^{4}\|e_\lambda\|_{3R}}{\|e_\lambda\|_{2R} }. $$
Substituting such $\tau$ in (\ref{substitute}) gives that
\begin{align}
\|e_\lambda\|_{2R}^{\frac{\beta_2+\beta_1}{\beta_2}} \leq e^{ C\sqrt{\lambda}}\|e_\lambda\|_{3R}^{\frac{\beta_1}{\beta_2}} \|e_\lambda\|_R.
\end{align}
Raising the exponent $\frac{\beta_2}{\beta_2+\beta_1}$ to both sides of the last inequality yields that
\begin{align}
\|e_\lambda\|_{2R} \leq e^{ C\sqrt{\lambda}}\|e_\lambda\|_{3R}^{\frac{\beta_1}{\beta_1+\beta_2}} \|e_\lambda\|_R^{\frac{\beta_2}{\beta_1+\beta_2}}.
\end{align}
Set $\beta={\frac{\beta_2}{\beta_1+\beta_2}}$. Then $0<\beta<1$. We arrive at the three-ball inequality in the lemma.
\end{proof}

Using the three-ball inequality (\ref{balls}) and growth of $e_\lambda$ estimates (\ref{want1}), following the proof of (\ref{exdouex}) in Section 2, we can show an analogous estimate
\begin{align}
\|e_\lambda\|_{L^\infty( A_{\frac{R}{2}, \ \frac{3R}{2}}({x_0}))}
&\geq e^{-{C(R){\sqrt{\lambda}}}} \|e_\lambda\|_{L^\infty(\tilde{\Omega})}.
\end{align}

Following the argument in the proof of Proposition 2 and  applying the Carleman estimates in (\ref{Carle1}) for eigenfunctions in (\ref{biexten}), we are able to derive the following doubling inequalities
\begin{equation}
\|e_\lambda\|_{L^\infty(\mathbb B_{2r}(x_0))}\leq e^{C\sqrt{\lambda}}\|e_\lambda\|_{L^\infty(\mathbb B_{r}(x_0))}
\label{NNN}
\end{equation}
for any $x_0\in \overline{\Omega}$ and $0<r\leq \frac{d}{4}$. By Corollary 1, the doubling inequality (\ref{NNN}) readily implies that the vanishing order of $e_\lambda$ is everywhere less than $C\sqrt{\lambda}$ in $\Omega$.

The proof of Theorem \ref{th2} is derived  using the doubling inequalities (\ref{NNN}) and Lemma \ref{wwhy} as the arguments in Theorem \ref{th1}.

\begin{proof}[Proof of Theorem 2]
For any point $(p, 0, 0)\in \overline{\Omega}\times [-\frac{1}{2}, \frac{1}{2}]\times [-\frac{1}{2}, \frac{1}{2}]$,
applying elliptic estimates for $u(x, t, s)$ in (\ref{ccaa}) in a
small ball $\mathbb B_{{r}}(p)\times [-r, r]\times [-r, r]\subset \widetilde{{\Omega}}\times [-1, 1\times [-1, 1]$, we have
\begin{equation}
|\frac{ D^{\alpha}_x u(p, 0, 0)}{{\alpha} !}|\leq
C^{| \alpha|}_3  r^{-| \alpha|} \|u\|_{L^\infty},
\end{equation}
where  $D^{\alpha}_x$ is the $|\alpha|$-order partial derivatives with respect to $x$ and $C_3>1$ depends on $\Omega$. By translation, we consider the point $p$ as the origin. From the definition of $u$, we obtain that
\begin{equation}
|\frac{ D^{\alpha} e_\lambda(0)}{{\alpha} !}|\leq
C^{| \alpha|}_3  r^{-| \alpha|} e^{C\sqrt{\lambda}}\|e_\lambda\|_{L^\infty(\mathbb B_r)}.
\end{equation}
 Thus, $e_\lambda(x)$ can be extended to be a holomorphic function
$e_\lambda(z)$ with $z\in\mathbb{C}^n$ by summing up a geometric series to have
\begin{align}
\sup_{|z|\leq \frac{r}{2C_3}}|e_\lambda(z)|\leq  e^{C_4\sqrt{\lambda}}\sup_{|x|\leq
r}|e_\lambda(x)|
\end{align}
for $C_4>1$.
Taking advantage of the doubling inequality (\ref{NNN}), from rescaling arguments,  we arrive at
\begin{equation}
\sup_{|z|\leq 2r}|e_\lambda(z)|\leq e^{C\sqrt{\lambda}} \sup_{|x|\leq
r}|e_\lambda(x)| \label{daranew}
\end{equation}
for $0<r<r_0$ with $r_0$ depending on $\Omega$ and $C$ independent of $r$ and $\lambda$.

We combine  Lemma \ref{wwhy} and the estimates
(\ref{daranew}) to obtain the measure of nodal sets.  By rescaling and translation, we  argue on scales
of order one. Let $p\in \mathbb B_{1/4}$ be  the
maximum of $|e_\lambda|$ in $\mathbb B_{1/4}$. For each
direction $\omega \in S^{n-1}$, let $e_{\omega}(z)=
e_\lambda(p+z\omega)$ in $z\in \mathcal{B}_1(0)\subset\mathbb{C}$. Recall that $N(\omega)=\sharp\{z\in\mathcal{B}_{1/2}(0)\subset
\mathbb{C}| e_\omega(z)=0\}$.
Applying the doubling property (\ref{daranew}) and the Lemma \ref{wwhy}, we have that
\begin{eqnarray}
\sharp\{ x \in \mathbb B_{1/2}(p) &|& x-p \ \mbox{is parallel to} \
\omega \ \mbox{and} \ e_\lambda(x)=0\} \nonumber\\&\leq&
\sharp\{z\in\mathcal{B}_{1/2}(0)\subset
\mathbb{C}| e_\omega(z)=0\} \nonumber\\
&=&N(\omega)\nonumber\\ &\leq& C\sqrt{\lambda}.
\end{eqnarray}
From the integral geometry estimates, we can show that
\begin{eqnarray}
H^{n-1}\{ x \in \mathbb B_{1/2}(p)| e_\lambda(x)=0\} &\leq&
c(n)\int_{S^{n-1}} N(\omega)\,d \omega \nonumber\\
&\leq &\int_{S^{n-1}}C\sqrt{\lambda}\, d\omega\nonumber\\ &=&C\sqrt{\lambda}.
\end{eqnarray}
Hence, it follows that
\begin{eqnarray}
H^{n-1}\{ x \in \mathbb B_{1/4}(0)| e_\lambda(x)=0\} \leq C\sqrt{\lambda}.
\end{eqnarray}
 Covering the domain $\overline{\Omega}$ using a finite number of balls yields that
\begin{equation} H^{n-1}\{x\in
\Omega|e_\lambda(x)=0\}\leq
C\sqrt{\lambda}.
\end{equation}
Therefore,  the proof in Theorem \ref{th2} is completed.

\end{proof}

\section{Nodal sets of eigenfunctions for champed-plate problems}
We are also interested in the upper bounds of nodal sets for the eigenvalue problem
\begin{equation}
\left \{ \begin{array}{lll}
\triangle^2 e_\lambda=\lambda e_\lambda  \quad &\mbox{in} \ {\Omega},  \medskip\\
\frac{\partial e_\lambda}{\partial \nu}=e_\lambda =0 \quad &\mbox{on} \ {\partial\Omega},
\end{array}
\right.
\label{stlap}
\end{equation}
which is the bi-Laplace eigenvalue problem with Dirichlet boundary conditions. As before, we aim to extend $e_\lambda$ across the boundary $\partial\Omega$ analytically.
We adopt the lifting argument.
Let
\begin{align}
u(x, t)= e^{ \sqrt{i}\lambda^\frac{1}{4}t} e_\lambda(x).
\label{defin}
\end{align}
It follows that
\begin{equation}
\left \{ \begin{array}{lll}
\triangle^2 u+\partial^4_t u=0 \quad &\mbox{in}  \ {\Omega}\times(-\infty, \ -\infty),  \medskip\\
\frac{\partial u}{\partial \nu}=u=0 \quad &\mbox{on} \ {\partial\Omega}\times(-\infty, \ -\infty).
\label{diee}
\end{array}
\right.
\end{equation}
Following the arguments in Proposition 1, we can extend $u(x, t)$ analytically across the boundary $\partial\Omega\times [-1, \ 1]$. Thus, $u(x, t)$ satisfies
\begin{align}
\triangle^2 u+\partial^4_t u=0 \quad &\mbox{in}  \ {\widetilde{\Omega}}\times [-1, \ 1]
\label{niucao}
\end{align}
 with
 \begin{align}
 \|u\|_{L^\infty(\widetilde{\Omega}\times [-1, \ 1])}\leq C \|u\|_{L^\infty(\Omega\times [-2, \ 2])},
 \end{align}
 where ${\widetilde{\Omega}}=\{x\in \mathbb R^n| \dist(x, \ \Omega)\leq d\}$ for some $d>0$ depends on $\partial\Omega$.
 The uniqueness of the analytic continuation gives that
\begin{align}
\triangle^2 e_\lambda=\lambda e_\lambda  \quad &\mbox{in} \ {\widetilde{\Omega}}.
\label{dark}
\end{align}
 From the definition of $u(x, t)$,
we will have the growth control estimates
\begin{align}
\|e_\lambda\|_{L^\infty(\widetilde{{\Omega}})}\leq e^{C \lambda^\frac{1}{4}} \|e_\lambda\|_{L^\infty({\Omega})},
\label{indu}
\end{align}

Next we need to establish some quantitative Carleman estimates to obtain doubling inequalities.
 The bi-Laplace operator with eigenvalues can be decomposed as
\begin{align}
\triangle^2-\lambda=(\triangle-\sqrt{\lambda})(\triangle+\sqrt{\lambda}),
\label{decom}
\end{align}
As the proof of Lemma \ref{prob},  we iterate the quantitative Carleman estimates (\ref{similar}) using the decomposition (\ref{decom}). There exist $R_0$ and $C$ as in Lemma \ref{prob} such that
\begin{align}
C\|r^4 e^{\tau \phi} (\triangle^2 f-\lambda f) \|&\geq \tau^\frac{3}{2} \|r^{\frac{\epsilon}{2}} e^{(\tau-2) \phi}(\triangle-\sqrt{\lambda}) f\|+\tau \delta \|r^{-1} e^{(\tau-2) \phi} (\triangle-\sqrt{\lambda})f\|\nonumber \\
&\geq \tau^3 \|r^{{\epsilon}} e^{\tau\phi} f\|+\tau^2 \delta^2 \|r^{-2} e^{\tau \phi} f\|
\label{newcarle}
\end{align}
for  any $f\in C^\infty_0(\mathbb B_{R_0}(x_0)\backslash \mathbb B_{\delta}(x_0))$ and  $\tau>C(1+\lambda^{\frac{1}{4}})$ with $\phi=-\ln r(x)+r^\epsilon(x)$.

By Carleman estimates (\ref{newcarle}) and the arguments in Lemma \ref{baa}, we can show the following three-ball inequality
\begin{align}
\|e_\lambda\|_{L^2(\mathbb B_{2R}(x_0))}\leq  e^{C\lambda^{\frac{1}{4}}} \|e_\lambda\|^\beta_{L^2(\mathbb B_{R}(x_0))}\|e_\lambda\|^{1-\beta}_{L^2(\mathbb B_{3R}(x_0))}.
\end{align}
By standard elliptic estimates, we have the $L^\infty$-norm three-ball inequality
\begin{align}
\|e_\lambda\|_{L^\infty(\mathbb B_{2R}(x_0))}\leq  e^{C\lambda^{\frac{1}{4}}}\|e_\lambda\|^\beta_{L^\infty(\mathbb B_{R}(x_0))}\|e_\lambda\|^{1-\beta}_{L^\infty(\mathbb B_{3R}(x_0))}.
\label{lastthree}
\end{align}

Following the arguments of (\ref{exdouex}) in Section 2 by applying the three-ball inequalities (\ref{lastthree}) finite times and (\ref{indu}), we obtain that
\begin{align}
\|e_\lambda\|_{L^\infty\big( A_{\frac{R}{2}, \ \frac{3R}{2}}({x_0})\big)}
&\geq e^{-{C(R)\lambda^{\frac{1}{4}}}} \|e_\lambda\|_{L^\infty(\widetilde{\Omega})}.
\label{exdou}
\end{align}

From the Carleman estimates (\ref{newcarle}), (\ref{exdou}),
 and the arguments in Proposition 2, we are able to derive the doubling inequality
\begin{align}
\|e_\lambda\|_{L^\infty\big(\mathbb B_{2r}(x_0)\big)}\leq e^{C\lambda^{\frac{1}{4}}} \|e_\lambda\|_{L^\infty\big(\mathbb B_{r}(x_0)\big)}
\label{fdoubl}
\end{align}
for any $0<r\leq\frac{d}{4}$ and $x_0\in \Omega$.

As in the proof of Theorem 1, we prove the upper bounds of nodal sets for eigenfunctions in (\ref{stlap}) using doubling inequalities (\ref{fdoubl}) and the complex growth Lemma \ref{wwhy}.

\begin{proof}[Proof of Theorem \ref{th3}]
For any point $(p, 0, 0) \in \overline{\Omega}\times [-\frac{1}{2}, \frac{1}{2}] $,
the elliptic estimates for $u(x, t)$ in (\ref{niucao}) in a
small ball $\mathbb B_{{r}}(p)\times (-r, r)\subset {\widetilde{\Omega}}\times [-1, 1]$ gives that
\begin{equation}
|\frac{ D_x^{ \alpha}u (p, 0)}{{\alpha} !}|\leq
C^{|\alpha|}_5  r^{-|\alpha|}\|u \|_{L^\infty},
\label{complex}
\end{equation}
where $C_5>1$ depends on $\Omega$.
We may consider the point $p$ as the origin as well. The definition of $u(x, t)$ yields that
 \begin{equation}
|\frac{ D^{ \alpha}e_\lambda (0)}{{\alpha} !}|\leq
C^{|\alpha|}_5  r^{-|\alpha|}e^{C\lambda^{\frac{1}{4}}}\|e_\lambda \|_{L^\infty(\mathbb B_r)}.
\end{equation}
 Thanks to (\ref{complex}),  we can sum up a geometric
series to extend $e_\lambda(x)$ to be a holomorphic function
$e_\lambda(z)$ with $z\in\mathbb{C}^{n}$. Then we have
\begin{align}
\sup_{|z|\leq \frac{r}{2C_5}}|e_\lambda(z)|\leq  e^{C_6\lambda^{\frac{1}{4}}}\sup_{|x|\leq
r}|e_\lambda(x)|
\end{align}
with $C_6>1$.
Thanks to the doubling inequality (\ref{fdoubl}), from rescaling arguments, it holds that
\begin{equation}
\sup_{|z|\leq 2r}|e_\lambda(z)|\leq e^{C\lambda^{\frac{1}{4}}}
\sup_{|x|\leq r}|e_\lambda(x)|
\label{dara1}
\end{equation}
for any $0<r<r_0$ with $r_0$ depending on $\Omega$ and $C$ independent of $r$ and $\lambda$.

Next we provide the proof of the upper bounds of nodal sets for $e_\lambda$.
  By rescaling and translation, we argue on scales
of order one. Let $p\in \mathbb B_{1/4}$ be the point where the
maximum of $|e_\lambda|$ in $\mathbb B_{1/4}$ is achieved. For each
direction $\omega \in S^{n-1}$, let $e_{\omega}(z)=
e_\lambda(p+z\omega)$ in $z\in \mathcal{B}_1(0)\subset\mathbb{C}$.
The doubling inequality (\ref{dara1}) and Lemma \ref{wwhy} yield that
\begin{eqnarray}
\sharp\{ x \in \mathbb B_{1/2}(p) &|& x-p \ \mbox{is parallel to} \
\omega \ \mbox{and} \ e_\lambda(x)=0\} \nonumber\\&\leq&
\sharp\{z\in\mathcal{B}_{1/2}(0)\subset
\mathbb{C}| e_\omega(z)=0\} \nonumber\\
&=&N(\omega)\nonumber\\ &\leq& C \lambda^\frac{1}{4}.
\end{eqnarray}
With aid of the integral geometry estimates, we derive that
\begin{eqnarray}
H^{n-1}\{ x \in \mathbb B_{1/2}(p)| e_\lambda(x)=0\} &\leq&
c(n)\int_{S^{n-1}} N(\omega)\,d \omega \nonumber\\
&\leq &\int_{S^{n-1}}C\lambda^\frac{1}{4}\, d\omega\nonumber\\ &=&C\lambda^\frac{1}{4},
\end{eqnarray}
which implies that
\begin{eqnarray}
H^{n-1}\{ x \in \mathbb B_{1/4}(0)| e_\lambda(x)=0\} \leq C\lambda^\frac{1}{4}.
\end{eqnarray}
 Covering the domain $\overline{\Omega}$ using finitely many of balls leads to
\begin{equation} H^{n-1}\{x\in
\Omega|e_\lambda(x)=0\}\leq
C\lambda^\frac{1}{4}. \label{last2}
\end{equation}
This completes the
conclusion in Theorem \ref{th3}. \end{proof}

For eigenvalue problems of higher order elliptic equations of general orders, two types of boundary conditions are commonly studied. There are higher order elliptic equations with Dirichlet boundary conditions
\begin{align}
\left \{\begin{array}{lll}
(-\triangle)^m e_\lambda=\lambda e_\lambda  \quad &\mbox{in} \ {\Omega},  \\
\frac{ \partial^{m-1} e_\lambda}{\partial \nu^{m-1}}=\frac{ \partial^{m-2} e_\lambda}{\partial \nu^{m-2}}=\cdots=e_\lambda=0 &\mbox{on} \ {\partial\Omega}
\end{array}
\right.
\label{moder}
\end{align}
and higher order elliptic equation with Navier boundary conditions
\begin{align}
\left \{\begin{array}{lll}
(-\triangle)^m e_\lambda=\lambda e_\lambda  \quad &\mbox{in} \ {\Omega},  \\
\triangle^{m-1} e_\lambda =\triangle^{m-2} e_\lambda=\cdots=e_\lambda=0 &\mbox{on} \ {\partial\Omega}
\end{array}
\right.
\label{Navier}
\end{align}
for any integer $m\geq 2$.
The approach in the proof of Theorem 3 is also applied for both (\ref{moder}) and (\ref{Navier}). We can obtain the following upper bounds of nodal sets.
\begin{corollary}
Let $e_\lambda$ be the eigenfunction in (\ref{moder}) or (\ref{Navier}). There exists a positive constant $C$ depending only on the real analytic domain $\Omega$ such that
\begin{equation} H^{n-1}(\{x\in \Omega |e_\lambda(x)=0\})\leq C\lambda^\frac{1}{2m}.
 \end{equation}
\end{corollary}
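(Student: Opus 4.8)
The plan is to repeat, essentially verbatim, the proof of Theorem \ref{th3}, with the bookkeeping exponent $1/4$ replaced by $1/(2m)$ throughout; the Dirichlet problem (\ref{moder}) and the Navier problem (\ref{Navier}) are handled in parallel, differing only at one point (see below). First I would set up the lifting. Since the boundary conditions in (\ref{moder}) and (\ref{Navier}) carry no $\lambda$, a single extra variable suffices. Writing the equation as $(-1)^m\triangle^m e_\lambda=\lambda e_\lambda$, put $u(x,t)=e^{\mu t}e_\lambda(x)$ with $\mu=\lambda^{1/(2m)}\zeta$, where $\zeta$ is a fixed root of $z^{2m}=(-1)^{m+1}$ (so $\zeta=1$ when $m$ is odd and $\zeta=e^{i\pi/(2m)}$ when $m$ is even). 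Then $\partial_t^{2m}u=\mu^{2m}u=(-1)^{m+1}\lambda u$, so that
\[(-\triangle_x)^m u+(-\partial_t^2)^m u=0\qquad\mbox{in }\ \Omega\times(-\infty,\infty),\]
with the same Dirichlet (resp. Navier) conditions on $\partial\Omega\times(-\infty,\infty)$. This operator is uniformly elliptic with $\lambda$-independent coefficients, and the boundary conditions satisfy the complementing conditions of \cite{ADN}; running the Fermi-coordinate flattening and the Morrey--Nirenberg analyticity estimates exactly as in Proposition \ref{pro1} extends $u$ across $\partial\Omega\times[-1,1]$ to $\widetilde\Omega\times[-1,1]$ with $\widetilde\Omega=\{x:\dist(x,\Omega)\le d\}$ and $\|u\|_{L^\infty(\widetilde\Omega\times[-1,1])}\le C\|u\|_{L^\infty(\Omega\times[-2,2])}$. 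By uniqueness of analytic continuation $(-\triangle)^m e_\lambda=\lambda e_\lambda$ in $\widetilde\Omega$, and since $|\mu|=\lambda^{1/(2m)}$ this yields the growth control $\|e_\lambda\|_{L^\infty(\widetilde\Omega)}\le e^{C\lambda^{1/(2m)}}\|e_\lambda\|_{L^\infty(\Omega)}$.

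Next I would produce the Carleman estimate and the doubling inequality. Factor $(-\triangle)^m-\lambda=\prod_{j=1}^m(-\triangle-c_j)$ with $c_j=\lambda^{1/m}e^{2\pi ij/m}$, so $|c_j|=\lambda^{1/m}$; treating each $c_j$ as a constant potential, $\sqrt{\|c_j\|_{C^1}}=\lambda^{1/(2m)}$, and iterating the second-order estimate (\ref{similar}) $m$ times — exactly as (\ref{newcarle}) is derived from (\ref{similar}) in the case $m=2$ — produces a weighted Carleman estimate for $(-\triangle)^m-\lambda$ with weight $\phi=-\ln r+r^\epsilon$, valid for $\tau>C(1+\lambda^{1/(2m)})$ (the precise powers of $\tau$ and $\delta$ appearing are immaterial for what follows). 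Feeding this into the cut-off argument of Lemma \ref{baa}, together with a Caccioppoli-type inequality of the form (\ref{hihcac}) for $(-\triangle)^m-\lambda$, gives the three-ball inequality $\|e_\lambda\|_{\mathbb B_{2R}(x_0)}\le e^{C\lambda^{1/(2m)}}\|e_\lambda\|^\beta_{\mathbb B_{R}(x_0)}\|e_\lambda\|^{1-\beta}_{\mathbb B_{3R}(x_0)}$ for $R<R_0/8$, $x_0\in\Omega$; propagation of smallness combined with the growth bound of the previous step gives the annulus estimate $\|e_\lambda\|_{L^\infty(A_{R/2,\,3R/2}(x_0))}\ge e^{-C(R)\lambda^{1/(2m)}}\|e_\lambda\|_{L^\infty(\widetilde\Omega)}$; and substituting this back into the Carleman estimate as in Proposition \ref{pro2} yields the doubling inequality $\|e_\lambda\|_{L^\infty(\mathbb B_{2r}(x_0))}\le e^{C\lambda^{1/(2m)}}\|e_\lambda\|_{L^\infty(\mathbb B_{r}(x_0))}$ for all $x_0\in\overline\Omega$ and $0<r\le d/4$ (hence also the vanishing-order bound $C\lambda^{1/(2m)}$ via Corollary \ref{cor1}).

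The remaining step is the zero-counting argument of Theorem \ref{th3}, used without change: interior elliptic estimates extend $e_\lambda$ to a holomorphic function on a complex ball of radius comparable to $r$, the doubling inequality upgrades this to $\sup_{|z|\le 2r}|e_\lambda(z)|\le e^{C\lambda^{1/(2m)}}\sup_{|x|\le r}|e_\lambda(x)|$, Lemma \ref{wwhy} bounds by $C\lambda^{1/(2m)}$ the number of zeros of $e_\lambda$ on each complex line through the point where $|e_\lambda|$ is maximal, and the integral-geometry (Crofton) formula together with a finite covering of $\overline\Omega$ gives $H^{n-1}(\{x\in\Omega:e_\lambda(x)=0\})\le C\lambda^{1/(2m)}$.

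The hard part — in fact the only nonroutine point — will be verifying in the first step that the lifted operator $(-\triangle_x)^m+(-\partial_t^2)^m$ together with the Navier conditions forms a properly elliptic boundary value problem satisfying the Lopatinski--Shapiro complementing condition, since that is precisely what licenses the Morrey--Nirenberg analytic-continuation-with-controlled-growth argument. For Dirichlet data this is automatic; for Navier data it follows from a short direct computation of the boundary symbol (alternatively, from the observation that the Navier problem for $(-\triangle)^m$ decouples into iterations of the Dirichlet Laplacian). Once this is in place, everything else is a mechanical transcription of Sections 2--4 with $1/4$ replaced by $1/(2m)$.
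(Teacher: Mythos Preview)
Your proposal is correct and follows essentially the same approach as the paper: the same single-variable lifting $u(x,t)=e^{\mu t}e_\lambda(x)$ with $\mu^{2m}=(-1)^{m+1}\lambda$, the same Morrey--Nirenberg extension, the same factorization $(-\triangle)^m-\lambda=\prod_k(-\triangle-\lambda^{1/m}e^{2\pi ik/m})$ with $m$-fold iteration of (\ref{similar}) to obtain the Carleman estimate, and the same doubling-plus-complex-growth endgame. If anything you are slightly more careful than the paper, which simply asserts without comment that the Navier case goes through; your remark about verifying the complementing condition (or decoupling into iterated Dirichlet Laplacians) addresses the one point the paper leaves implicit.
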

\begin{proof}
We only sketch the main ideas of the proof, since the arguments are quite similar to the proof of Theorem \ref{th3}.  We first consider the eigenvalue problem (\ref{moder}).
To do the analytic continuation across the boundary $\partial\Omega$,  we perform the lifting argument for $e_\lambda$ in (\ref{moder})
as
\begin{align*}
u(x, t)=\left \{ \begin{array}{lll}
e^{ i^\frac{1}{m} \lambda^{\frac{1}{2m}}t } e_\lambda \quad &m \ \mbox{even}, \\
e^{  \lambda^{\frac{1}{2m}}t } e_\lambda \quad &m \ \mbox{odd}.
\end{array}
\right.
\end{align*}
Then  $u(x, t)$ satisfies the equation
\begin{align}
\left \{\begin{array}{lll}
(-\triangle)^m u(x, t)+ (-1)^m\partial_t^{2m} u(x, t)=0  \quad &\mbox{in} \ {\Omega}\times (-\infty, \ \infty),  \\
\frac{ \partial^{m-1} u}{\partial \nu^{m-1}}=\frac{ \partial^{m-2} u}{\partial \nu^{m-2}}=\cdots=u=0 &\mbox{on} \ {\partial\Omega}\times (-\infty, \ \infty).
\end{array}
\right.
\end{align}

Following the arguments in Proposition 1, the elliptic estimates  will allow the analytic continuation of $u(x, t)$ across the boundary $\partial\Omega\times [-1, 1]$. Then we have
\begin{align}
(-\triangle)^m u+(-1)^m \partial^{2m}_t u=0 \quad &\mbox{in}  \ {\widetilde{\Omega}}\times [-1, \ 1].
\end{align}
 We are also able to  derive the equation
\begin{align}
(-\triangle)^m e_\lambda=\lambda e_\lambda  \quad &\mbox{in} \ {\widetilde{\Omega}}
\end{align}
and have the growth estimates
\begin{align}
\|e_\lambda\|_{L^\infty(\widetilde{{\Omega}})}\leq e^{C \lambda^\frac{1}{2m}} \|e_\lambda\|_{L^\infty({\Omega})}.
\label{higher}
\end{align}

Next step is to obtain the doubling inequalities.
We adapt the quantitative Carleman estimates (\ref{similar}) for the higher order elliptic operator $(-\triangle)^m-\lambda$.  By fundamental theorem of algebra, the higher order elliptic operator can be decomposed as
\begin{align}
(-\triangle)^m-\lambda=\prod^{m-1}_{k=0}(-\triangle- \lambda^{\frac{1}{m}} e^{\frac{2ki\pi}{m}}).
\end{align} We
iterate the Carleman estimates (\ref{similar}) $m$ times as Lemma \ref{prob}. It follows that
\begin{align}
C\|r^{2m} e^{\tau \phi} ((-\triangle)^m-\lambda) f\|\geq \tau^\frac{3m}{2} \|r^{\frac{\epsilon m}{2}} e^{\tau \phi} f\|+\tau^m \delta^m \|r^{-m} e^{\tau \phi} f\|
\label{hiCarl}
\end{align}
for any $f\in C^\infty_0(\mathbb B_{R_0}(x_0)\backslash \mathbb B_{\delta}(x_0))$ and  $\tau>C(1+\lambda^{\frac{1}{2m}})$. Following the arguments in Proposition 2, we make use of growth estimates (\ref{higher}) and Carleman estimates (\ref{hiCarl}) to establish the doubling inequalities
\begin{equation}
\|e_\lambda\|_{L^\infty(\mathbb B_{2r}(x_0))}\leq e^{C \lambda^\frac{1}{2m}}\|e_\lambda\|_{L^\infty(\mathbb B_{r}(x_0))}
\label{highdouble}
\end{equation}
for any $x_0\in \overline{\Omega}$ and $0<r\leq \frac{d}{4}$. As in the proof of Theorem 3, the doubling inequality (\ref{highdouble}) and complex growth lemma  imply the measure of nodal sets of eigenfunctions in (\ref{moder}). Thus, we can obtain the desired estimates
\begin{equation} H^{n-1}\{x\in
\Omega|e_\lambda(x)=0\}\leq
C\lambda^\frac{1}{2m}.
\label{achiv}
\end{equation}

The same approach can also be applied for the eigenvalue problem (\ref{Navier}) with the exactly same upper bounds of nodal sets in (\ref{achiv}).
Thus, the corollary is completed.

\end{proof}


\begin{thebibliography}{CL}

\bibitem[ADN]{ADN} S. Agmon, A. Douglis, and L. Nirenberg, Estimates near the boundary for solutions of elliptic
partial differential equations satisfying general boundary conditions, I. Comm. Pure Appl.
Math., 12:623-727, 1959.

\bibitem[AMRV]{AMRV} G. Alessandrini, A. Morassi, E. Rosset and S. Vessella, On doubling
inequalities for elliptic systems, J. Math. Anal. App.,  357(2009), no.2,  349-355.

\bibitem[BC]{BC}L. Bakri and J.B.  Casteras,  Quantitative uniqueness for Schr\"{o}dinger operator with regular
potentials, Math. Methods Appl. Sci., 37(2014), 1992--2008.

\bibitem[BL]{BL}K. Bellova and F.-H. Lin, Nodal sets of Steklov eigenfunctions,
Calc. Var. $\&$ PDE, 54(2015), 2239--2268.

\bibitem[Br]{Br} J. Br\"uning, \"Uber Knoten won Eigenfunktionen des
Laplace-Beltrami-Operators, Math. Z., 158(1978), 15--21.

\bibitem[BG]{BG} O. Bruno and J.  Galkowski,  Domains without dense Steklov nodal sets, J. Fourier Anal. Appl., 26(2020), no.3, 29pp.

\bibitem[C]{C} J-E. Chang, Lower bounds for nodal sets of biharmonic Steklov problems, J. London Math. Soc. 95(2017), 763-784.

 \bibitem[CM]{CM} T.H. Colding and W. P. Minicozzi II, Lower bounds for nodal sets
of eigenfunctions, Comm. Math. Phys., 306(2011), 777--784.

\bibitem[D]{D} R-T Dong, Nodal sets of eigenfunctions on Riemann surfaces, J. Differential Geom., 36(1992), 493--506.

\bibitem[DF]{DF}
H. Donnelly and C. Fefferman,
 Nodal sets of eigenfunctions on {R}iemannian manifolds,
 {Invent. Math.}, 93(1988), no. 1, 161--183.

 \bibitem[DF1]{DF1} H. Donnelly and C. Fefferman, Nodal sets of eigenfunctions:
Riemannian manifolds with boundary, in: Analysis, Et Cetera,
Academic Press, Boston, MA, 1990, 251--262.

 \bibitem[DF2]{DF2}
 H. Donnelly and C. Fefferman, Nodal sets for eigenfunctions of the Laplacian on surfaces, J. Amer.
Math. Soc., 3(1990), no. 2, 333--353.


\bibitem[FGW]{FGW}A. Ferrero, F. Gazzola and T. Weth, on a four other Steklov eigenvalue problem, Analysis, 25(2005), 315-332.

\bibitem[GL]{GL} N. Garofalo and F.-H. Lin, Monotonicity properties of
variational integrals, $A_p$ weights and unique continuation,
{Indiana Univ. Math.}, 35(1986), 245--268.

\bibitem[GR]{GR} B. Georgiev and G. Roy-Fortin, Polynomial upper bound on interior Steklov nodal sets, J. Spectr. Theory 9(2019), no.3, 897-919.

\bibitem[GT]{GT}J. Galkowski and J. Toth, Pointwise bounds for Steklov eigenfunctions, J. Geom. Anal., 29(2019), no.1, 142-193.

\bibitem[HL]{HL} Q. Han and F.-H. Lin, Nodal sets of solutions of Elliptic
Differential Equations, book in preparation (online at
http://www.nd.edu/qhan/nodal.pdf).


\bibitem[HS]{HS} R. Hardt and L. Simon, Nodal sets for solutions of
ellipitc equations, J. Differential Geom., 30(1989), 505--522.

\bibitem[HSo]{HSo} H. Hezari and C.D. Sogge, A natural lower bound for the
size of nodal sets, Anal. PDE., 5(2012), no. 5, 1133--1137.


\bibitem[H1]{H1} L. H\"ormander,  Uniqueness theorems for second order elliptic differential equations.
Comm. Partial Differential Equations, 8(1983), 21-64.

\bibitem[Ku]{Ku} I. Kukavica, Nodal volumes for eigenfunctions of analytic regular elliptic problems,  J. Anal. Math., 67(1995), 269-280.

\bibitem[KS]{KS} J. Kuttler and V. Sigillito, inequalities for memebrane and Stekloff eigenvalures, J. Math. Anal. Appl. 23(1968), 148-160.

\bibitem[Lin]{Lin}F.-H. Lin, Nodal sets of solutions of elliptic
equations of elliptic and parabolic equations, Comm. Pure Appl Math.,
44(1991), 287--308.

\bibitem[Lo]{Lo} A. Logunov, Nodal sets of Laplace eigenfunctions: polynomial upper estimates of the Hausdorff measure,  Ann. of Math., 187(2018), 221--239.

\bibitem[Lo1]{Lo1} A. Logunov, Nodal sets of Laplace eigenfunctions: proof of Nadirashvili's conjecture and of the lower bound in Yau's conjecture,  Ann. of Math.,  187(2018), 241--262.

\bibitem[LM]{LM} A. Logunov and E. Malinnikova, Nodal sets of Laplace eigenfunctions: estimates of the Hausdorff measure in dimension two and three, 	50 years with Hardy spaces, 333-344, Oper. Theory Adv. Appl., 261, Birkh\"auser/Springer, Cham, 2018.


\bibitem[M]{M} D. Mangoubi, A remark on recent lower bounds for nodal sets, Comm. Partial Differential Equations, 36(2011), no. 12, 2208--2212.

\bibitem[Me]{Me} V. Meleshko, Selected topics in the theory of two-dimensional biharmonic
problem, Appl. Mech. Rev. 56(2003), no.1, 33-85.

\bibitem[Mo]{Mo} C. Morrey, Multiple integrals in the calculus of variations, Reprint of the 1966 edition,Springer-Verlag, Berlin, 2008, x+506 pp.

\bibitem[MN]{MN} C.B. Morrey and L. Nirenberg, On the analyticity of the solutions of linear elliptic systems of partial differential equations,  Comm. Pure Appl Math., 10(1957) 271-290.

\bibitem[P]{P} L. Payne, some isoperimetric inequalities for harmonic functions,  SIAM J. Math. Anal., 1(1970), 354-359.

\bibitem[PST]{PST} I. Polterovich, D. Sher and J. Toth,
 Nodal length of Steklov eigenfunctions on real-analytic
Riemannian surfaces, J. Reine Angew. Math. 754(2019), 17-47.


\bibitem[SWZ]{SWZ} C.D. Sogge, X. Wang and J. Zhu, Lower bounds for interior nodal sets of Steklov eigenfunctions,
 Proc. Amer. Math. Soc., 144(2016), no. 11, 4715--4722.

\bibitem[SZ]{SZ} C.D. Sogge and S. Zelditch, Lower bounds on the
Hausdorff measure of nodal sets, Math. Res. Lett., 18(2011), 25--37.

\bibitem[S]{S} S. Steinerberger, Lower bounds on nodal sets of eigenfunctions via the heat flow, Comm. Partial Differential Equations, 39(2014), no. 12, 2240--2261.

\bibitem[TZ]{TZ}J. Toth and S. Zelditch, Counting nodal lines which touch the boundary of an analytic domain, J. Differential
Geom., 81(2009), no.3, 649-686.

\bibitem[WZ]{WZ} X. Wang and J. Zhu,  A lower bound for the nodal sets of Steklov
eigenfunctions,  Math. Res. Lett., 22(2015), no.4, 1243--1253.

\bibitem[Y]{Y} S.T. Yau, Problem section, seminar on differential geometry, Annals of Mathematical Studies 102, Princeton, 1982, 669--706.

\bibitem[Z]{Z} S. Zelditch, Complex zeros of real ergodic eigenfunctions, Invent. Math., 167(2007), no.2, 419-449.

\bibitem[Z1]{Z1} S. Zelditch, Measure of nodal sets of analytic
steklov eigenfunctions, Math. Res. Lett., 22(2015), no.6, 1821--1842.


\bibitem[Zh]{Zh} J. Zhu,  Doubling property and vanishing order of Steklov
eigenfunctions, Comm. Partial Differential Equations, 40(2015), no. 8, 1498-1520.

\bibitem[Zh1]{Zh1} J. Zhu, Interior nodal sets of Steklov eigenfunctions on surfaces,
 Anal. PDE, 9(2016), no. 4, 859--880.

\bibitem[Zh2]{Zh2} J. Zhu, Quantitative unique continuation of solutions to higher order elliptic
equations with singular coefficients, Calc. Var. Partial Differential Equations, 57(2018), no.2, 35pp.

\bibitem[Zh3]{Zh3}J. Zhu, Doubling inequality and nodal sets for solutions of bi-Laplace equations,  Arch. Rational Mech. Anal., 232(2019), no.3, 1543-1595.

\bibitem[Zh4]{Zh4}J. Zhu, Nodal sets of Robin and Neumann eigenfunctions, arXiv:1810.12974

\bibitem[Zh5]{Zh5}J. Zhu, Geometry and interior nodal sets of Steklov eigenfunctions, Calc. Var. Partial Differential Equations, 59(2020), no. 5, 150.

\end{thebibliography}
\end{document}